\newtheorem{thm}{Theorem}[section]
\newtheorem{defi}[thm]{Definition}
\newtheorem{lemm}[thm]{Lemma}
\newtheorem{coro}[thm]{Corollary}
\newcommand{\bbox}{\normalsize {}%
        \nolinebreak \hfill $\blacksquare$ \medbreak \par}
\newenvironment{proof}[1][\unskip]{\noindent\emph{Proof #1:} }{\bbox\vspace{0,15cm}}
\newcommand{\D}{\mathrm D}
\newcommand{\wh}[1]{\widehat{#1}}
\newcommand{\ii}{\mathrm{i}}
\newcommand{\ol}[1]{\overline{#1}}
\newcommand{\lto}{\longrightarrow}
\newcommand{\EL}{\mathrm{E}_{\leqslant\Lambda}}
\newcommand*{\defeq}{\mathrel{\rlap{%
                     \raisebox{0.3ex}{$\m@th\cdot$}}%
                     \raisebox{-0.3ex}{$\m@th\cdot$}}%
                     =}
\title{Spectrally cut-off GFF, regularized $\Phi^4$ measure, and reflection positivity}
\author{I. Bailleul\footnote{Univ Brest, CNRS UMR 6205, Laboratoire de Math\'ematiques de Bretagne Atlantique, France. E-mail: ismael.bailleul@univ- brest.fr. Partial support from the ANR-11-LABX-0020-01 Labex CHL and the ANR project Smooth ANR-22-CE40-0017 is acknowledged.}, N.V. Dang\footnote{Sorbonne Universit\'e – Universit\'e de Paris, CNRS, UMR 7586, Paris, France. Institut Universitaire de France.
E-mails: nguyen-viet.dang@imj-prg.fr, jiasheng.lin@imj-prg.fr, gaetan.leclerc@imj-prg.fr}, L. Ferdinand\footnote{Laboratoire de Physique des 2 infinis Ir\`ene Joliot-Curie, UMR 9012, Universit\'e Paris-Saclay, Orsay, France. E-mail: lferdinand@ijclab.in2p3.fr}, G. Leclerc$^{\textrm{\textdagger}}$ and J. Lin$^{\textrm{\textdagger}}$}
\begin{document}

\maketitle

\begin{abstract}
We argue that the spectrally cut-off Gaussian free field $\Phi_\Lambda$ on a compact Riemannian manifold or on $\mathbb{R}^n$ cannot satisfy the spatial Markov property. Moreover, when the manifold is reflection positive, we show that $\Phi_\Lambda$ fails to be reflection positive. We explain the difficulties one encounters when trying to deduce the reflection positivity property of the measure exp$(-\|\rho\Phi_\Lambda\|_{L^4}^4) \mu_{\text{GFF}}(d\Phi)$ from the reflection positivity property of the Gaussian free field measure $\mu_{\text{GFF}}$ in a naive way. These issues are probably well-known to experts of constructive quantum field theory but to our knowledge, no detailed account can be found in the litterature. Our pedagogical note aims to fill this small gap. 
\end{abstract}

\section{Introduction and context}

\noindent \textbf{1.1 Markov property and reflection positivity of a random field.} Let $(M,g)$ be a smooth, closed, compact Riemannian manifold or $M=\mathbb{R}\times \Sigma$ where $\Sigma$ is a complete Riemannian manifold endowed with the product metric $dt^2+g_\Sigma$. We  denote by $\Delta_g$ the positive Laplace-Beltrami operator acting on $C^\infty(M)$. In case $M$ is compact, we write $(e_\lambda)_{\lambda\in \sigma(\Delta_g)}$ for the $L^2$ basis of eigenfunctions of $\Delta_g$ and 
$$
\mathrm E_{\leqslant \Lambda} \defeq \mathrm{span}(e_\lambda)_{\lambda\leqslant\Lambda^2}\subset C^\infty(M).
$$
The massive Gaussian Free Field (GFF in the sequel) $\Phi$ of law $\mu$ is defined
on $M$ compact as
the random series
$$ \Phi\defeq\sum_{\lambda\in\sigma(\Delta_g)}\frac{c_\lambda}{(\lambda+1)^{\frac{1}{2}}} \, e_\lambda  $$
where the coefficients $c_\lambda$ are independent, identically distributed, random variables with Gaussian distribution $\mathcal{N}(0,1)$. On a cylinder $M=\mathbb{R}\times \Sigma$ the massive Gaussian free field is defined as the unique Gaussian process $\Phi$ indexed by $H^{-1}(M)$ with covariance
$$\mathbb{E}\big[\Phi(f)\Phi(h)\big] = \left\langle f,h\right\rangle_{H^{-1}(M)}.$$
Pick $\varepsilon>0$. One can realize the Gaussian process $\Phi$ as a random variable with values in $H^{\frac{2-d}{2}-\varepsilon}(M)$. Denoting by $\mu_{\text{GFF}}$ the law of $\Phi$, we will work in the sequel with the canonical probability space $\big(H^{\frac{2-d}{2}-\varepsilon}(M), \mathcal{O}, \mu_{\text{GFF}}\big)$, where $\mathcal{O}$ stands for the Borel $\sigma$-algebra of $H^{(2-d)/2-\varepsilon}(M)=\Omega$. One can take $\Phi(\omega)=\omega$ for every $\omega\in\Omega$.

\begin{defi}
In both cases of $M$ compact or a cylinder it is well-known that $\Delta_g:C^\infty_c(M)\subset L^2(M)\rightarrow L^2(M) $ is essentially self-adjoint hence it admits a well-defined functional calculus. 
For $\Lambda>0$ we use the notation 
$$
\Pi_\Lambda\defeq\mathbbm{1}_{[0,\Lambda^2]}\big(\Delta_g\big)
$$ 
for the sharp spectral projector -- so when $M$ is compact one has $\Pi_\Lambda(\Delta_g):\mathcal D'(M)\rightarrow \EL$. Note that $\Pi_\Lambda$ is self-adjoint. We define the spectrally cut-off GFF 
as
$$\Phi_\Lambda\defeq\Pi_\Lambda(\Phi)\,, $$ 
which is a random smooth function. 
\end{defi}

Note that we could take a smooth compactly supported cut-off $\Psi\big(\frac{  \Delta_g}{\Lambda^2}\big)$ in the sequel, without loss of generality except in Section \ref{sec2}. We will use the notion of smooth functionals  on a locally convex space such as $\mathcal{D}^\prime(M),C^\infty(M)$ or on some Sobolev space $H^s(M)$ as discussed in~\cite{DBLR}, along with the notion of support of a smooth functional. Both notions are recalled hereafter, we start with smooth functionals~:
\begin{defi}
Let $E$ be a locally convex space and $U\subset E$ an open subset. A function $F:U\mapsto \mathbb{R}$ is \textbf{smooth} if for every $k\in \mathbb{N}$, $x\in U$, every $(h_1,\dots,h_k)\in E^k$, the limit
$$  \frac{\partial^kF\big(x+t_1h_1+\dots+t_kh_k\big)}{\partial t_1 \dots\partial t_k}|_{(t_1,\dots,t_k)=0}=\D^k_xF\big(h_1,\dots,h_k\big) $$ exists and $\D^kF:U\times E^k\mapsto \mathbb{R}$ is jointly continuous as a function of its arguments and linear in $h_1,\dots,h_k$.   
\end{defi}
We next discuss the notion of support:
\begin{defi}\label{def:suppfonc}
In what folllows $E=C^\infty(M),\mathcal{D}^\prime(M)$ or some Sobolev space $H^s(M)$ of distributions on some smooth manifold $M$.
Let $U\subset E$ be an open subset. 
The \textbf{support of some smooth functional} $F: U\rightarrow \mathbb R$, denoted by $\mathrm{supp}(F)$, is the set containing those points $x\in M$ such that for every neighbourhood $A_x\subset M$ of $x$, there exists $\phi,\psi\in U$ that verify $F(\phi)\neq F(\psi)$ and $\mathrm{supp}(\phi-\psi)\subset A_x$.
\end{defi} 

We next introduce the notion of reflection positive manifold  following the work of Jaffe \& Ritter~\cite{JR07,JR08,JRmon}:
\begin{defi}  
We say that the manifold $M$ is \textbf{reflexion positive} if there exists an \textbf{isometric involution} $\Theta:M\mapsto M$ which admits as invariant subset a submanifold $\Sigma$ of dimension $d-1$ such that $M\setminus \Sigma$ is the disjoint union of two open manifolds $M_+$ and $M_-$ that both verify $\partial \overline{M_+}=\partial \overline{M_-}=\Sigma$. One can therefore think of $M$ as the disjoint union
\begin{align*}
    M=M_+\cup\Sigma\cup M_-\,,\,
    \Theta|_\Sigma=\mathrm{Id}_\Sigma\,,\, \Theta (M_\pm) = M_\mp.
\end{align*}
\end{defi}

\begin{figure}[ht]
\includegraphics[scale=0.55]{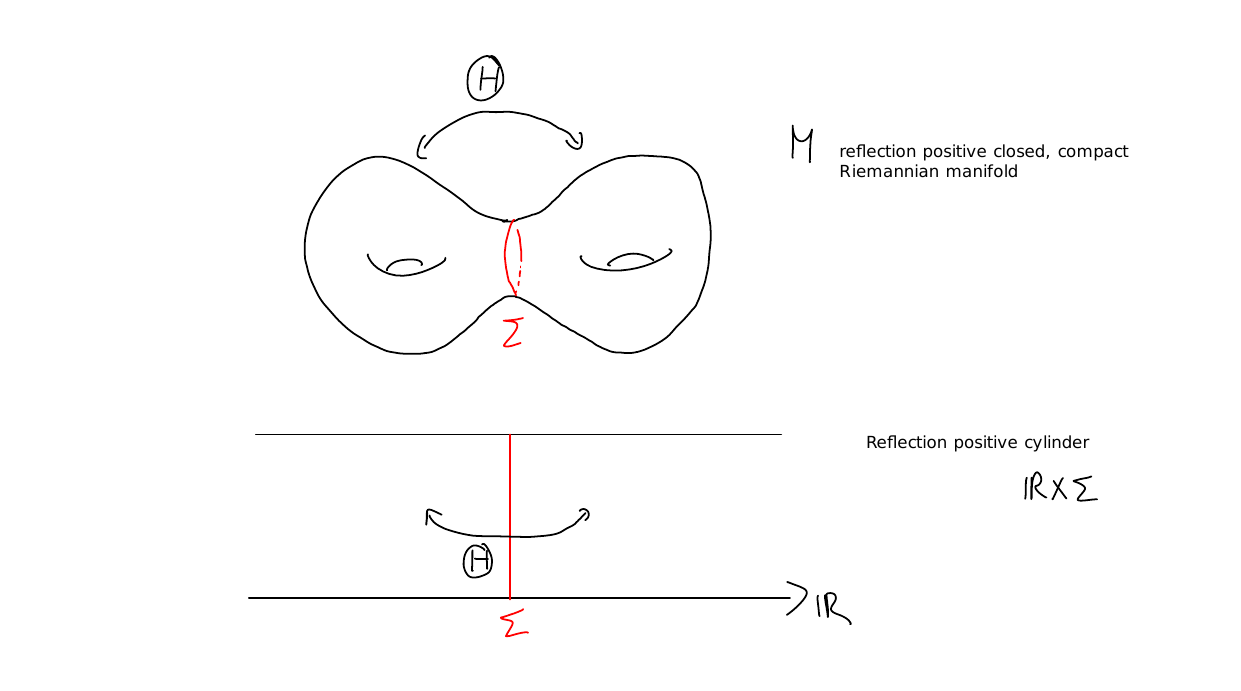}
\centering
\caption{\label{f:RP_space}.}
\end{figure}

\noindent For clarity we write below $\overline{M}_\pm$ for $\overline{M_\pm}$ and refer to figure~\ref{f:RP_space}. The spaces of constant curvature $\mathbb R^d$, $\mathbb S^d$, and $\mathbb H^d$ are reflection positive. In the sequel, we will mostly be interested in the cases $M=\mathbb R\times\Sigma,\,\mathbb S^d$, for which we precise the setting. The cylinder $\mathbb R\times\Sigma$ is reflection positive with respect to $\{0\}\times\Sigma$. The sphere $\mathbb S^d$ is reflection positive with respect to any of its equators. Whenever a manifold $M$ is reflection positive with isometric involution $\Theta$, the map $\Theta$ acts via pullback on $C^\infty(M)$ and $\mathcal{D}'(M)$ and hence on functionals on $M$ and $\mathcal{D}'(M)$-valued random variables. Note that for a distribution $\omega$
\begin{equation} \label{EqSymmetrySpectralProjectors}
\Pi_\Lambda(\omega)_{\vert\overline{M}_-} = \big(\Pi_\Lambda(\omega) \circ\Theta\big)_{\vert\overline{M}_+} = \Pi_\Lambda(\omega\circ\Theta)_{\vert\overline{M}_+}.
\end{equation}

%Let $A$ be a closed subset of $M$. For $f\in C(A)$ we denote by $f^M$ its canonical extension by $0$ outside $A$. Let $\Psi$ be a random distribution defined on our probability space $\Omega$, with law $\mu_\Psi$ and associated expectation operator $\mathbb{E}_\Psi[\cdot]$. We introduce a sub $\sigma$-algebra of $\mathcal{O}$
%\begin{align*}    
%\sigma(\Psi;A) \defeq \sigma\Big(  \Psi(f^M) ; f\in C(A), \text{supp}(f)\subset A\Big).
%\end{align*}
%We implicitly assume here that we only consider those fonctions $f\in C(A)$ for which $\Psi(f^M)$ is well-defined. One can think of $\sigma(\Psi;A)$ as the $\sigma$-algebra generated by the observables that are only sensitive to the fluctuation of the field $\Psi$ on the set $A$.

Let $A$ be a closed subset of $M$. We denote by $W^{-1}_A(M) $ the subset of elements in the Sobolev space $W^{-1}(M)$ which are supported in $A$. Let $\Psi$ be a random distribution defined on our probability space $\Omega$, with law $\mu_\Psi$ and associated expectation operator $\mathbb{E}_\Psi[\cdot]$. We introduce a sub $\sigma$-algebra of $\mathcal{O}$
\begin{align*}    
\sigma(\Psi;A) \defeq \sigma\Big(  \Psi(f) ; f\in W^{-1}_A(M)\Big).
\end{align*}
We implicitly assume here that we only consider those fonctions $f\in  W^{-1}_A(M)$ for which $\Psi(f)$ is well-defined. One can think of $\sigma(\Psi;A)$ as the $\sigma$-algebra generated by the observables that are only sensitive to the fluctuation of the field $\Psi$ on the set $A$.

\begin{defi}
A random process $\Psi$ as above has the \textbf{Markov property} if for \textit{any} closed subsets $A, B$ of $M$ such that $A\cap B$ has an empty interior, see figure~\ref{f:Markov},  and for any random variable $F\in L^2$ that is $\sigma(\Psi;B)$-measurable, one has
\begin{equation*}
  \mathbb{E}\big[F | \sigma(\Psi;A)\big] = \mathbb{E}\big[F | \sigma(\Psi;\partial A)\big].
  \label{}
\end{equation*}
\end{defi}

\begin{figure}[ht]
\includegraphics[scale=0.4]{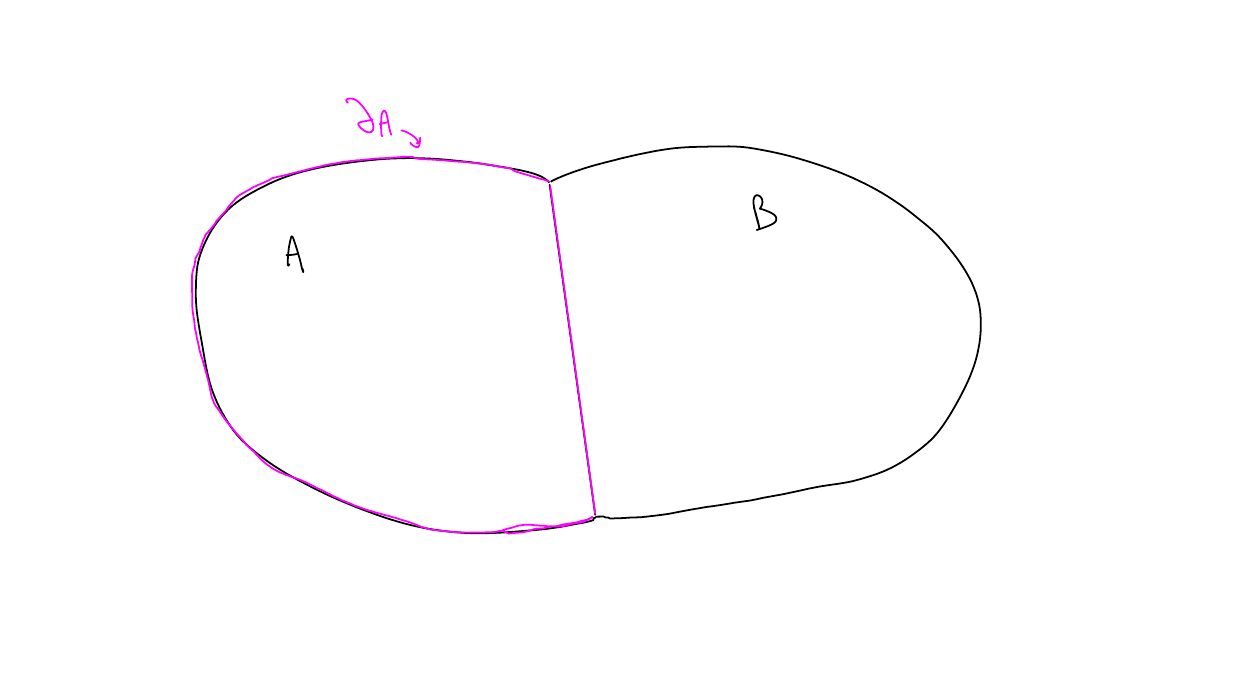}
\centering
\caption{\label{f:Markov}.}
\end{figure}

The Markov property was introduced by Nelson in \cite{Nel1},\cite[p.~224-225]{Nel2}; he proved that $\mu_{\text{GFF}}$ verifies this property ~\cite[p.~225]{Nel2}.

\begin{defi}
A random process $\Psi$ on a reflection positive manifold $M$ is \textbf{reflection positive} if for any $\mathbb{C}$-valued function $F$ in $ L^2\big(\mathcal{D}^\prime(M),\sigma(\Psi;\overline{M}_+),\mu_\Psi\big)$ it holds
\begin{align*}
    \mathbb E_\Psi\big[(\overline{\Theta F}) F\big] \geqslant 0.
\end{align*}
\end{defi}

\noindent Reflection positivity is one of the axioms introduced by Osterwalder and Schrader \cite{OS}; it is a crucial condition to recover a Lorentzian Quantum Field Theory from a Euclidean Quantum Field Theory. It is connected to the Markov property by the following fact

\begin{lemm} [\cite{Dim}, Theorem 2] \label{LemMarkovImpliesRP}
On a reflection positive Riemannian manifold $(M,g)$ a random field that has the Markov property is reflection positive.
\end{lemm}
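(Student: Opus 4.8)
The plan is to exploit the reflection positivity of the manifold $M$ together with the Markov property in order to realize $\mathbb{E}_\Psi[(\overline{\Theta F})F]$ as the squared norm of an $L^2$ object, which is automatically nonnegative. First I would reduce to the case where $F$ is bounded and depends on the field $\Psi$ only through its restriction to a neighbourhood of $\overline{M}_+$; a density argument then extends the conclusion to all of $L^2\big(\mathcal{D}'(M),\sigma(\Psi;\overline{M}_+),\mu_\Psi\big)$. The key object is the conditional expectation operator onto $\sigma(\Psi;\Sigma)$, where $\Sigma$ is the invariant hypersurface of the involution $\Theta$: I would set $\widehat{F}\defeq\mathbb{E}[F\mid\sigma(\Psi;\Sigma)]$.

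The heart of the argument uses the Markov property twice, once on each side of $\Sigma$. Since $F$ is $\sigma(\Psi;\overline{M}_+)$-measurable and $\overline{\Theta F}$ is $\sigma(\Psi;\overline{M}_-)$-measurable (because $\Theta$ exchanges $M_+$ and $M_-$, and conjugation does not affect measurability), and since $\overline{M}_+\cap\overline{M}_-=\Sigma$ has empty interior, the Markov property gives
$$
\mathbb{E}\big[\,\overline{\Theta F}\,\big|\,\sigma(\Psi;\overline{M}_+)\big] = \mathbb{E}\big[\,\overline{\Theta F}\,\big|\,\sigma(\Psi;\Sigma)\big] = \overline{\Theta\widehat{F}},
$$
where the last equality uses that $\Theta$ fixes $\Sigma$ pointwise and that $\mu_\Psi$ is $\Theta$-invariant (which follows from the $\Theta$-invariance of the covariance, itself a consequence of $\Theta$ being an isometry). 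Hence, conditioning on $\sigma(\Psi;\overline{M}_+)$ and using that $F$ is measurable with respect to it,
$$
\mathbb{E}_\Psi\big[(\overline{\Theta F})F\big] = \mathbb{E}_\Psi\Big[F\,\mathbb{E}\big[\overline{\Theta F}\,\big|\,\sigma(\Psi;\overline{M}_+)\big]\Big] = \mathbb{E}_\Psi\big[F\,\overline{\Theta\widehat{F}}\big].
$$
Now repeating the same manoeuvre with the roles reversed — replacing $F$ by $\overline{\Theta\widehat F}$, which is $\sigma(\Psi;\Sigma)\subset\sigma(\Psi;\overline{M}_-)$-measurable, and conditioning on $\sigma(\Psi;\overline{M}_-)$ — together with the tower property and $\Theta$-invariance of $\mu_\Psi$, reduces the expression to $\mathbb{E}_\Psi\big[\widehat F\,\overline{\Theta\widehat F}\big]=\mathbb{E}_\Psi\big[\widehat F\,\overline{\widehat F}\big]=\mathbb{E}_\Psi\big[|\widehat F|^2\big]\geqslant 0$, using once more that $\Theta$ acts trivially on $\sigma(\Psi;\Sigma)$-measurable functions together with $\Theta$-invariance of $\mu_\Psi$.

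The main obstacle is the careful bookkeeping around the conditioning operators: one must verify that $\Theta$ genuinely acts as the identity on $\sigma(\Psi;\Sigma)$-measurable random variables modulo $\mu_\Psi$-null sets — this is where the hypotheses that $\Theta$ is an \emph{isometric} involution and that $\Sigma$ is \emph{pointwise} fixed are both essential, via invariance of the covariance and hence of the law $\mu_\Psi$ — and that the two applications of the Markov property are legitimate, i.e. that the relevant pairs of closed sets intersect in a set with empty interior. A secondary technical point is the approximation step allowing one to assume $F$ bounded and local so that all conditional expectations and products are genuinely in $L^1$; this is routine but should be stated. Alternatively, one can avoid the two-sided argument and instead factor $\mathbb{E}_\Psi[(\overline{\Theta F})F]$ directly through $\mathbb{E}_\Psi[\,|\mathbb{E}[F\mid\sigma(\Psi;\Sigma)]|^2\,]$ in one step using the conditional independence of $\sigma(\Psi;\overline{M}_+)$ and $\sigma(\Psi;\overline{M}_-)$ given $\sigma(\Psi;\Sigma)$, which is an equivalent reformulation of the Markov property; I would present whichever version is cleaner once the measure-theoretic details are pinned down.
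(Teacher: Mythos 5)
Your argument is correct and is essentially the standard proof of this result, which the paper does not reprove but simply cites from Dimock's Theorem~2: one application of the Markov property with $A=\overline{M}_+$, $B=\overline{M}_-$ replaces $\mathbb{E}[\overline{\Theta F}\mid\sigma(\Psi;\overline{M}_+)]$ by $\mathbb{E}[\overline{\Theta F}\mid\sigma(\Psi;\Sigma)]$, after which $\Theta$-invariance of the law and the tower property yield $\mathbb{E}_\Psi[(\overline{\Theta F})F]=\mathbb{E}_\Psi[|\widehat F|^2]\geqslant 0$ (your second pass through the Markov property is redundant, since once $\widehat F$ is $\sigma(\Psi;\Sigma)$-measurable the tower property alone finishes the job). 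The two caveats you flag are exactly the right ones: $\Theta$-invariance of $\mu_\Psi$ is an implicit hypothesis of the lemma rather than a consequence of the Markov property alone, and the triviality of the $\Theta$-action on $\sigma(\Psi;\Sigma)$-measurable variables holds here because elements of $W^{-1}_\Sigma(M)$ are single layers on the fixed hypersurface $\Sigma$ (no normal derivatives fit in $W^{-1}$), hence are genuinely $\Theta$-invariant.
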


\smallskip

\noindent \textbf{1.2 The regularized $\Phi^4_3$ measure built from the cut-off interaction.} The $\Phi^4_3$ measure $\nu$ is a probability measure on $\mathcal D'(M)$ with $(M,g,\Theta)$ a closed three dimensional reflection positive manifold that heuristically reads as
\begin{align} \label{EqPhi43Density}
\nu(d\Phi)\propto e^{- c\Vert\Phi\Vert^4_{L^4(M)}} \, \mu_{\text{GFF}}(d\Phi).
\end{align}
 We use the non-conventional notation $c$ for the coupling constant. Since the dimension of $M$ is greater then or equal to 2 the GFF is not supported on $L^p$ functions and the $L^4$ norm of $\Phi$ is almost surely infinite. The formal expression \eqref{EqPhi43Density} is thus meaningless. The probability measure $\nu$ can therefore only be defined as the weak limit of a sequence of approximations $( \nu_{\rho,\Lambda})_{\Lambda\geqslant0}$. A choice which is often made is to consider
\begin{align}\label{eqnu}     
 \nu_{\rho,\Lambda}(d\Phi)\propto e^{- c\Vert \rho\Phi_\Lambda\Vert^4_{L^4(M)} - c a_\Lambda\Vert \rho\Phi_\Lambda\Vert^2_{L^2(M)}} \, \mu_{\text{GFF}}(d\Phi)\,,
\end{align}
where $\rho\in C^\infty_c(M)$ is identically equal to $1$ on a large compact set of $M$ and $(a_\Lambda)_{\Lambda\geqslant0}$ is a suitably chosen sequence of real number that is divergent at large $\Lambda$. 

On $M$ the $\Phi^4_3$ measure $\nu$ is a good starting point to construct a QFT provided $M$ is reflection positive and $\nu$ is proved to be reflection positive too. When $\nu$ is constructed as a limit of a sequence of approximations the only way to prove reflection positivity of $\nu$ is to show that the measures $\nu_{\rho,\Lambda}$ are reflection positive for all $\Lambda$, in which case $\nu$ inherits the reflection positivity property of its approximations.

In \cite{AK} Albeverio \& Kusuoka  use the approximation \eqref{eqnu} to construct the $\Phi^4_3$ measure on $\mathbb R^d$ and claim that the measures $( \nu_{\rho,\Lambda})_{\Lambda\geqslant0}$ are reflection positive -- from which they conclude that the limiting measure is also reflection positive. They justify their claim in~\cite[section 7.2 p.~82-83]{AK} as follows. Denoting for any closed subset $A\subset M$
\begin{align} \label{EqGWholeSpace}
G_{A,\rho,\Lambda} \defeq e^{- c\Vert\rho\Phi_\Lambda\Vert^4_{L^4(A)} -  c a_\Lambda\Vert\rho\Phi_\Lambda\Vert^2_{L^2(A)}} \leq 1,
\end{align} 
one has  $G_{M,\rho,\Lambda} = G_{\overline{M}_-,\rho,\Lambda}G_{\overline{M}_+,\rho, \Lambda}$. They first claim that $G_{A,\rho,\Lambda} \in L^2\big(H^{\frac{2-d}{2}-\varepsilon}(M),\sigma(\Phi;A),\mu_{\text{GFF}}\big)$ and for 
$F\in L^2\big(H^{\frac{2-d}{2}-\varepsilon}(M),\sigma(\Phi;\overline{M}_+),\mu_{\text{GFF}}\big)$ they reexpress in~\cite[section 7.2 last line of p.~82 and first two lines of p.~83]{AK}
the expectation
\begin{align*}
\mathbb E_{{\color{black} \nu_{\rho,\Lambda}}}\big[(\overline{\Theta F}) F\big] =\frac{1}{Z_\Lambda} \mathbb E_{\mu_{\text{GFF}}}\big[G_{\overline{M}_-,{\color{black} \rho},\Lambda} \, G_{\overline{M}_+,{\color{black} \rho},\Lambda}\,(\overline{\Theta F}) F\big]
\end{align*}
with $Z_{{\color{black} \rho},\Lambda}\defeq\mathbb E_{\mu_{\text{GFF}}}[ G_{M,{\color{black} \rho},\Lambda}]$, under the form 
$$
\mathbb E_{\mu_{\text{GFF}}}\big[G_{\overline{M}_-,{\color{black} \rho},\Lambda} \, G_{\overline{M}_+,{\color{black} \rho},\Lambda}\,(\overline{\Theta F}) F\big] \overset{\eqref{EqSymmetrySpectralProjectors}}{=} \int F(\omega) \, G_{\overline{M}_+,{\color{black} \rho},\Lambda}(\omega) \,\overline{F(\omega\circ\Theta)}  \, G_{\overline{M}_+,{\color{black} \rho},\Lambda}(\omega\circ\Theta) \, \mu_{\text{GFF}}(d\omega).
$$
The reflection positivity of ${\color{black} \nu_{\rho,\Lambda}}$ would then stem from the reflection positivity of the measure $\mu_{\text{GFF}}$ applied to $FG_{\overline{M}_+,\Lambda}$, provided $G_{\overline{M}_+,{\color{black} \rho},\Lambda}$ and $FG_{\overline{M}_+,{\color{black} \rho},\Lambda}$ are both in $L^2(H^{\frac{2-d}{2}-\varepsilon}(M),\sigma(\Phi;\overline{M}_+),\mu_{\text{GFF}})$. However we make the following observation.  \textit{For any closed subset $A\subsetneq M$ of non-empty interior and $\Lambda$ sufficiently large the random variable $G_{A,{\color{black} \rho},\Lambda}$ is not $\sigma(\Phi;A)$-measurable.} In fact this will follow from a more general claim which is proved in Section~\ref{sec1}~:

\begin{thm} \label{thm1}
Let $M$ be a smooth complete Riemannian manifold of dimension $d$. Let $A\subsetneq M$ be a closed subset with nonempty interior, and $G$ a smooth functional on $C^\infty(M)$ such that the random variable $G\circ \Pi_\Lambda(\Phi)$ is $\sigma(\Phi;A)$-measurable. Then the function $G\circ \Pi_\Lambda$ on $H^{\frac{2-d}{2}-\varepsilon}(M)$ is constant.
\end{thm}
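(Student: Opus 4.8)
The plan is to show that the functional $G$ is constant on $\mathrm{Ran}(\Pi_\Lambda)=\Pi_\Lambda(\Omega)$, where $\Omega\defeq H^{\frac{2-d}{2}-\varepsilon}(M)$; since every $\Pi_\Lambda\omega$ with $\omega\in\Omega$ lies in this range, that is exactly the asserted conclusion. The first step is to turn the measurability hypothesis into a pointwise translation invariance: I claim that $G\big(\Pi_\Lambda(\omega+\eta)\big)=G\big(\Pi_\Lambda\omega\big)$ for every $\omega\in\Omega$ and every $\eta\in C^\infty_c(M\setminus A)$. Indeed, the support of such an $\eta$ is a compact subset of the open set $M\setminus A$, hence disjoint from $A$, so $\langle\eta,f\rangle=0$ for every $f\in W^{-1}_A(M)$; consequently $(\omega+\eta)(f)=\omega(f)$ for all such $f$, so any $\sigma(\Phi;A)$-measurable function of $\Phi$ is left unchanged when $\Phi$ is shifted by $\eta$. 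A priori this holds only outside a $\mu_{\text{GFF}}$-null set, but that null set is harmless: $\eta\in H^1(M)$ lies in the Cameron--Martin space of $\mu_{\text{GFF}}$, under translation by which $\mu_{\text{GFF}}$ is quasi-invariant. Hence the claimed identity holds $\mu_{\text{GFF}}$-almost everywhere; both sides are continuous functions of $\omega$ (since $G$ is smooth, hence continuous, and $\Pi_\Lambda:\Omega\to C^\infty(M)$ is continuous) and $\mu_{\text{GFF}}$ has full topological support in $\Omega$, so the identity holds for every $\omega$.

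The identity just obtained says exactly that $G(v+w)=G(v)$ for every $v\in\mathrm{Ran}(\Pi_\Lambda)$ and every $w$ in the subspace $W\defeq\Pi_\Lambda\big(C^\infty_c(M\setminus A)\big)\subset\mathrm{Ran}(\Pi_\Lambda)$; taking $v=0$ gives $G\equiv G(0)$ on $W$, and since $G$ is continuous on $C^\infty(M)$ it would follow that $G\equiv G(0)$ on all of $\mathrm{Ran}(\Pi_\Lambda)$ as soon as $W$ is dense in $\mathrm{Ran}(\Pi_\Lambda)$ for the $C^\infty(M)$ topology. Suppose it is not: picking $v_\ast=\Pi_\Lambda\omega_\ast\in\mathrm{Ran}(\Pi_\Lambda)$ outside the closure of $W$, Hahn--Banach produces a compactly supported distribution $T\in\mathcal{E}'(M)=\big(C^\infty(M)\big)'$ with $\langle T,w\rangle=0$ for all $w\in W$ but $\langle T,\Pi_\Lambda\omega_\ast\rangle\neq0$. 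Since $\Pi_\Lambda$ is self-adjoint and smoothing --- it maps $\mathcal{E}'(M)$ into $C^\infty(M)$ --- the first condition reads $\langle\Pi_\Lambda T,\eta\rangle=\langle T,\Pi_\Lambda\eta\rangle=0$ for all $\eta\in C^\infty_c(M\setminus A)$, that is, the smooth function $\Pi_\Lambda T$ vanishes on the nonempty open set $M\setminus A$.

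The crux is therefore the unique continuation statement: \emph{a function lying in $\mathrm{Ran}\big(\mathbbm{1}_{[0,\Lambda^2]}(\Delta_g)\big)$ and vanishing on a nonempty open subset of $M$ vanishes identically}. Granting it, $\Pi_\Lambda T\equiv0$, whence $\langle T,\Pi_\Lambda\omega_\ast\rangle=\langle\Pi_\Lambda T,\omega_\ast\rangle=0$, a contradiction; so $W$ is dense and, by the previous paragraph, $G\circ\Pi_\Lambda$ is constant. To prove the unique continuation statement when $M$ is compact, write $\psi=\sum_{\lambda\leqslant\Lambda^2}a_\lambda e_\lambda\in\EL$ and assume $\psi$ vanishes on an open set $U$; for a fixed eigenvalue $\lambda_0\leqslant\Lambda^2$, the \emph{local} differential operator $\prod_{\lambda\leqslant\Lambda^2,\,\lambda\neq\lambda_0}(\Delta_g-\lambda)$ sends $\psi$ to a nonzero scalar multiple of its $\lambda_0$-eigencomponent and still annihilates it on $U$, so that component vanishes on $U$ and hence, by Aronszajn's strong unique continuation theorem for the second-order elliptic operator $\Delta_g-\lambda_0$, vanishes on all of $M$; letting $\lambda_0$ range over $[0,\Lambda^2]$ yields $\psi\equiv0$. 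When $M=\mathbb R^d$, $\Pi_\Lambda f$ has Fourier transform supported in $\{|\xi|\leqslant\Lambda\}$, hence is the restriction of an entire function by Paley--Wiener--Schwartz, which settles the claim; on a cylinder $M=\mathbb R\times\Sigma$ one combines analyticity in the Euclidean variable with the compact-manifold argument on $\Sigma$. I expect this unique continuation property to be the only genuinely hard point --- everything else uses only quasi-invariance, full support, Hahn--Banach and the smoothing of $\Pi_\Lambda$ --- and, for a general complete manifold whose Laplacian may have continuous spectrum, to require an appeal to unique continuation for the associated wave equation in place of the finite-order elliptic argument above.
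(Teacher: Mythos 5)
Your proof is correct and reaches the stated conclusion by a route that shares its first half with the paper's but genuinely diverges in the second. The common part is the upgrade of the almost-sure translation invariance $F(\Phi+h)=F(\Phi)$, $h\in C^\infty_c(A^c)$, $F=G\circ\Pi_\Lambda$, to a pointwise identity using continuity of $F$ and the full topological support of $\mu_{\text{GFF}}$; you are in fact a bit more careful than the paper here, since you address via Cameron--Martin quasi-invariance why the exceptional null set is harmless. From that point the paper \emph{differentiates}: it gets $\D_{\Phi_\Lambda}G(\Pi_\Lambda h)=0$ for all $h\in C^\infty_c(A^c)$, applies its Corollary~\ref{Lemm21} to conclude $\Pi_\Lambda \D_{\Phi_\Lambda}G=0$, hence the derivative of $G\circ\Pi_\Lambda$ vanishes in every direction and the functional is constant. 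You instead work with $G$ itself: the invariance makes $G$ constant on $W=\Pi_\Lambda\big(C^\infty_c(A^c)\big)$, and a Hahn--Banach separation reduces density of $W$ in $\mathrm{Ran}(\Pi_\Lambda)$ to exactly the same unique continuation input the paper uses, namely that an element of $\mathrm{Ran}(\Pi_\Lambda)$ vanishing on a nonempty open set vanishes identically. The two endgames are of comparable difficulty --- yours trades the manipulation of $\D_{\Phi_\Lambda}G$ as a distribution for a duality argument in $\mathcal{E}'(M)$. The more substantive difference is your proof of the unique continuation statement on compact $M$: rather than quoting the Lebeau--Robbiano/Jerison--Lebeau spectral inequality as in Lemma~\ref{thmnodal}, you isolate each eigencomponent with the local operator $\prod_{\lambda\neq\lambda_0}(\Delta_g-\lambda)$ and invoke Aronszajn's strong unique continuation; this is a valid, more elementary and self-contained alternative, and on $\mathbb{R}^d$ both proofs fall back on Paley--Wiener. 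Finally, note that the theorem is stated for an arbitrary complete manifold while both your argument and the paper's (through Corollary~\ref{Lemm21}) really only cover the compact and $\mathbb{R}^n$ cases; your explicit acknowledgment of this restriction matches the actual scope of the paper's proof rather than constituting an additional gap.
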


In the particular case we are interested in Theorem \ref{thm1} tells us that if $G_{A,{\color{black} \rho},\Lambda}(\Phi)$ were indeed $\sigma(\Phi;A)$-measure then the function $G_{A,{\color{black} \rho},\Lambda}(\cdot)$ on $H^{\frac{2-d}{2}-\varepsilon}(M)$ would be constant. We obtain a contradiction as the polynomial functional $f\in H^{\frac{2-d}{2}-\varepsilon}(M)\mapsto {\color{black} c}\Vert{\color{black} \rho}f_\lambda \Vert^4_{L^4(A)} + {\color{black} ca_\Lambda}\Vert{\color{black} \rho}f_\lambda \Vert^2_{L^2(A)}$ should then constant, giving by rescaling a non-constant polynomial function of degree four of a real variable. Now since $G_{\overline{M}_\pm,{\color{black} \rho},\Lambda}$ is not $\sigma(\Phi;\overline{M}_\pm)$-measurable one \textbf{cannot apply the spatial Markov property} to write the equality
$$ 
\mathbb E_{\mu_{\text{GFF}}}\big[G_{\overline{M}_-,{\color{black} \rho},\Lambda}G_{\overline{M}_+,{\color{black} \rho},\Lambda} (\overline{\Theta F}) F\big] = \mathbb E_{\mu_{\text{GFF}}}\Big[ \mathbb{E}_{\mu_{\text{GFF}}}\left[ G_{\overline{M}_-,{\color{black} \rho},\Lambda}\overline{\Theta F} \big| \sigma(\Phi;\Sigma) \right] \mathbb{E}_{\mu_{\text{GFF}}} \left[ G_{\overline{M}_+,{\color{black} \rho},\Lambda} F \big| \sigma(\Phi;\Sigma) \right]\Big].
$$
This seems to indicate that the discussion in \cite[Section 7.2 last line of p.82 and first two lines of p.83]{AK} might need some more justifications. {\color{black} This seems necessary as we prove in Theorem \ref{ThmNotRFRegularizedPhi43} in Section \ref{SectionCounterexample} that the regularized $\Phi^4_3$ measures $\nu_{\rho,\Lambda}$ are \textit{\textbf{not reflection positive}} for a small enough coupling constant $c>0$ in \eqref{eqnu}.} Moreover we give hereafter three arguments that show that the cut-off free field $\Phi_\Lambda$ can by no means be neither Markov nor reflection positive, neither on cylinders like $\mathbb R^d$ nor on compact manifolds.  (It seems that the authors of \cite{AK} erroneously claim that $\Phi_\Lambda$ has the Markov property when write that ``Similarly to the proof of Theorem 5 in [156], we can prove that $P_N\Phi$ under the free field measure $\mu_0$ is a Markov field'', at the beginning of their Section 7.2.) This also confirms that any approach involving spectral cut-offs is unlikely to be efficient in order to prove the reflection positive of the $\Phi^4_3$ measure.
 
\medskip
 
\noindent \textbf{1.3 Organization of the work.} We prove Theorem~\ref{thm1} in Section~\ref{sec1}. This somehow shows that the spatial Markov property for the cut-off GFF is not well-posed since we prove that the only $\sigma(\Phi;\overline{M}_+)$-measurable random variables of the form $G(\Phi_\Lambda)$ where $G$ is smooth are constants. In Section \ref{sec2} we give a counter-example showing that the cut-off free field the cylinder $\mathbb R\times \Sigma$ is not reflection positive, by constructing a function on which the bilinear form associated to the cut-off covariance acts negatively, see Theorem~\ref{thm2} below. In Section \ref{sec3} we show that the cut-off free field on compact manifolds also has a covariance which is not reflection positive, by constructing a counter-example, see Theorem~\ref{thm41} below. Given Lemma \ref{LemMarkovImpliesRP} the results of these sections show that the cut-off regularized Gaussian free field does not have the Markov property in these settings. All our arguments point toward some possible issue in the proof of reflection positivity of the $\Phi^4_3$ measure proposed in \cite{AK}. Note that in the flat case $M=\mathbb R^3$, it is a well-established fact that the $\Phi^4_3$ measure is reflection positive, since it can be constructed as the limit of a sequence of Gibbs measure on the lattice \cite{GH}, where regularized measures are reflection positive. However this construction does not generalize to the case of a compact Riemannian manifold $M$ where lattice regularization is not an option. Hence the proof of the reflection positivity of the $\Phi^4_3$ measure on the sphere $\mathbb S^3$ is still an open problem. {\color{black} We prove  in Section \ref{SectionCounterexample} that the regularized $\Phi^4_3$ measures $\nu_{\rho,\Lambda}$ are not reflection positive for a small enough coupling constant $c>0$.}

\medskip

\noindent \textbf{1.4 Acknowledgements}
We would like to thank D. Benedetti, M. Gubinelli, A. Mouzard, T.D.T\^o, M. Wrochna for their interest and comments on the present note. N.V.D acknowledges the support of the Institut Universitaire de France.
The authors would like to
thank the ANR grant SMOOTH ”ANR-22-CE40-0017” and QFG ”ANR-20-CE40-0018” for support.

\section{The cut-off GFF admits only trivial smooth localized observables}\label{sec1}

We prove Theorem \ref{thm1} in this section. Our proof is based on the result due to Lebeau-Robbiano~\cite{LR95}, Jerison-Lebeau~\cite{JL99}, Lebeau-Zuazua~\cite{LZ98} which asserts that the zero set (also called \textbf{nodal set}) of linear combinations of Laplace eigenfunctions always has empty interior:
\begin{lemm}[Nodal sets of linear combinations of eigenfunctions]\label{thmnodal}
Let $(M,g)$ be a smooth closed compact Riemannian manifold.
Then for any $\Lambda\in(0,\infty)$ and any nontrivial finite 
linear combination of the eigenfunctions of $\Delta_g$ that we denote by $f\in\EL$, the zero set $Z_f=\{x\in M|f(x)=0\}$ of the function $f$ has \textbf{empty interior}. 
\end{lemm}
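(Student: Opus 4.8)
The plan is to deduce the statement from strong unique continuation for a second order elliptic operator obtained by adding one dimension; this is the qualitative shadow of the Carleman estimates of \cite{LR95,JL99,LZ98}. I argue by contradiction: suppose some nontrivial finite sum $f=\sum_{\lambda\leqslant\Lambda^2}c_\lambda e_\lambda\in\EL$ vanishes identically on a nonempty open set $\mathcal O\subset M$ (we take $M$ connected, as is implicit for a closed manifold). On the product $\mathbb R\times M$ equipped with the metric $\dd t^2+g$, the positive Laplace--Beltrami operator is $L\defeq-\partial_t^2+\Delta_g$, which is elliptic with smooth coefficients. Consider
$$w(t,x)\defeq\sum_{\lambda\leqslant\Lambda^2}c_\lambda\,\cosh\!\big(\sqrt\lambda\,t\big)\,e_\lambda(x),$$
which is a \emph{finite} sum, hence smooth on $\mathbb R\times M$. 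Since $\Delta_g e_\lambda=\lambda e_\lambda$, one checks at once that $Lw=0$, that $w(0,\cdot)=f$, and, by evenness in $t$, that $\partial_t w(0,\cdot)=0$.

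The crucial observation is that $w$ vanishes to infinite order at every point $p_0=(0,x_0)$ with $x_0\in\mathcal O$. All purely spatial derivatives of $w$ vanish at $p_0$ because $w(0,\cdot)=f$ is identically zero on the neighbourhood $\mathcal O$ of $x_0$. For the $t$-derivatives, note that $\partial_t w$ also solves $L(\partial_t w)=0$, so the relation $\partial_t^2=\Delta_g$ on solutions yields, by induction, $\partial_t^{2m}w(0,\cdot)=\Delta_g^m f$ and $\partial_t^{2m+1}w(0,\cdot)=\Delta_g^m\partial_t w(0,\cdot)=0$; the former vanishes identically near $x_0$, being a differential operator applied to a function that is $\equiv 0$ there, and the latter vanishes everywhere. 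Hence every mixed derivative $\partial_t^{a}\partial_x^{\beta}w$ vanishes at $p_0$.

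It remains to apply strong unique continuation for $L$ (of Aronszajn type, valid here since the coefficients of $L$ are smooth; equivalently, a consequence of the Carleman estimates of \cite{LR95,JL99,LZ98} on this same augmented problem): a solution of $Lw=0$ vanishing to infinite order at a point vanishes on a neighbourhood of that point. Therefore the set $Z_\infty$ of points of $\mathbb R\times M$ at which $w$ vanishes to infinite order is open; it is closed, as the intersection over all multi-indices $\alpha$ of the closed sets $\{\partial^\alpha w=0\}$; and it is nonempty by the previous paragraph. Since $\mathbb R\times M$ is connected, $Z_\infty=\mathbb R\times M$, so $w\equiv 0$, and in particular $f=w(0,\cdot)\equiv 0$, contradicting the choice of $f$. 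Hence $Z_f$ has empty interior.

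The only non-elementary ingredient is the unique continuation theorem, which I would invoke as a black box. The delicate point is that $w$ merely vanishing on the hypersurface $\{0\}\times\mathcal O$ is by itself far too weak — nodal sets of harmonic functions are genuine hypersurfaces — so one really exploits the evenness of $\cosh$, that is $\partial_t w(0,\cdot)=0$, to promote this to infinite-order vanishing; this is precisely the mechanism that makes the one-dimensional elliptic lift work, and it is where the proof would stand or fall if $\Delta_g$ were replaced by a less symmetric situation.
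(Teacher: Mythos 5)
Your proof is correct, but it takes a genuinely different route from the paper's. The paper quotes the quantitative Lebeau--Robbiano/Jerison--Lebeau spectral inequality $\Vert \varphi\Vert_{L^2(M)}\leqslant Ce^{K\Lambda}\Vert\varphi\Vert_{L^2(U)}$ for $\varphi\in\EL$ and any fixed open $U$, and the lemma is then a one-line consequence: if $f\in\EL$ vanished on $U$ the right-hand side would be zero. You instead run the qualitative argument that sits \emph{upstream} of that inequality: the harmonic lift $w(t,x)=\sum c_\lambda\cosh(\sqrt\lambda\,t)e_\lambda(x)$ solving $(-\partial_t^2+\Delta_g)w=0$ on $\mathbb R\times M$, the observation that evenness in $t$ upgrades vanishing of $f$ on an open set to infinite-order vanishing of $w$ along $\{0\}\times\mathcal O$, and Aronszajn-type strong unique continuation plus an open-closed argument to conclude $w\equiv0$. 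Your computation of the jet ($\partial_t^{2m}w(0,\cdot)=\Delta_g^m f$, odd $t$-derivatives zero) is right, and your remark that mere vanishing on the hypersurface would be insufficient is exactly the correct delicate point. What each approach buys: the paper's is shorter and records the quantitative constant $Ce^{K\Lambda}$ (unused here, but this is the form of the result they cite); yours relies only on the classical, purely qualitative SUCP for elliptic operators with smooth coefficients and makes the mechanism transparent. One common caveat for both proofs: connectedness of $M$ is genuinely needed (on a disconnected manifold the statement is false, as an eigenfunction extended by zero to another component shows); you flag this explicitly, the paper leaves it implicit in the hypotheses of the cited spectral inequality.
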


\begin{proof}
Assume there exists $f\in \EL$ such that $\Vert f\Vert_{L^2(M)}=1$ and $Z_f$ contains an open subset $U$.
Then the Lebeau-Robbiano spectral inequality~\cite[Thm 14.6 p.~230]{JL99}
states that given $U\subset M$, there exists constants $C,K>0$ such that for all $\varphi\in \EL$, we have an inequality of the form:
$$ \Vert \varphi\Vert_{L^2(M)} \leqslant Ce^{K\Lambda}\Vert \varphi\Vert_{L^2(U)}\,, $$
where $C,K$ do not depend on $\varphi$.
Setting $\varphi=f$ yields
$$\Vert f\Vert_{L^2(M)} \leqslant Ce^{K\Lambda}\Vert f\Vert_{L^2(U)}=0\,, $$
since we assumed $f|_U=0$. This yields a contradiction with the assumption $\Vert f\Vert_{L^2(M)}=1$.
\end{proof}

\begin{coro} \label{Lemm21}
We assume $(M,g)$ is a smooth closed compact Riemannian manifold or $M=\mathbb{R}^n$ with the flat metric and $B$ is some open subset $B\subsetneq M $.
Let $T\in \mathcal{D}^\prime(M)$ be a distribution such that for all $f\in C_c^\infty(B)$, $T(\Pi_\Lambda f)=0$. Then $\Pi_\Lambda T=0$.
\end{coro}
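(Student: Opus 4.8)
\textbf{Proof plan for Corollary \ref{Lemm21}.} The plan is to deduce the statement from Lemma \ref{thmnodal} by a duality argument, passing from the vanishing of $T$ against $\Pi_\Lambda C_c^\infty(B)$ to the vanishing of the smooth function $\Pi_\Lambda T$ on the open set $B$, and then invoking the nodal set result to conclude that $\Pi_\Lambda T$ vanishes identically. The first observation is that $\Pi_\Lambda$ is self-adjoint (as noted in the Definition following Theorem statement), so for every $f\in C_c^\infty(B)$ one has $0 = T(\Pi_\Lambda f) = (\Pi_\Lambda T)(f)$. Since $\Pi_\Lambda T$ is a smooth function when $M$ is compact (it lies in $\EL$) or, in the case $M=\mathbb{R}^n$, a smooth function because $\Pi_\Lambda$ is a pseudodifferential-type smoothing operator with Schwartz-class convolution kernel, the pairing $(\Pi_\Lambda T)(f) = \int_M (\Pi_\Lambda T)(x) f(x)\,d\mathrm{vol}_g(x)$ makes sense, and its vanishing for all $f\in C_c^\infty(B)$ forces $\Pi_\Lambda T \equiv 0$ on $B$ by the fundamental lemma of the calculus of variations.

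Next I would upgrade ``$\Pi_\Lambda T$ vanishes on the open set $B$'' to ``$\Pi_\Lambda T$ vanishes on all of $M$''. In the compact case this is exactly Lemma \ref{thmnodal}: the function $\Pi_\Lambda T$ belongs to $\EL$, and if it were not identically zero its nodal set $Z_{\Pi_\Lambda T}$ would have empty interior, contradicting the fact that it contains the nonempty open set $B$; hence $\Pi_\Lambda T = 0$. In the flat case $M=\mathbb{R}^n$ one cannot invoke the compact nodal set lemma directly, but $\Pi_\Lambda T$ is a real-analytic function (its Fourier transform is compactly supported in the ball of radius $\Lambda$, so it is the restriction of an entire function of exponential type by Paley--Wiener), and a real-analytic function on the connected manifold $\mathbb{R}^n$ that vanishes on a nonempty open set vanishes identically. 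Either way we obtain $\Pi_\Lambda T = 0$.

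The main technical point to be careful about is the regularity and mapping properties of $\Pi_\Lambda$ in the two settings: that $\Pi_\Lambda T$ is a genuine smooth (indeed analytic) function for any distribution $T$, so that restricting it to $B$ and testing against $C_c^\infty(B)$ is legitimate, and that self-adjointness of $\Pi_\Lambda$ transfers correctly to the distributional pairing $\langle T, \Pi_\Lambda f\rangle = \langle \Pi_\Lambda T, f\rangle$. In the compact case both are immediate from the spectral decomposition, since $\Pi_\Lambda$ projects onto the finite-dimensional space $\EL$ spanned by smooth eigenfunctions and the pairing identity is a finite sum manipulation; in the $\mathbb{R}^n$ case one uses that the kernel of $\mathbbm{1}_{[0,\Lambda^2]}(-\Delta)$ is given by an oscillatory integral over $\{|\xi|\le\Lambda\}$ which is smooth and grows polynomially, so it maps $\mathcal{D}'(M)$ into $C^\infty(M)$ and the transpose computation is standard. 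Once these points are in place the corollary follows directly, with Lemma \ref{thmnodal} (respectively analyticity) supplying the final step.
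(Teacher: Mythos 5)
Your proposal is correct and follows essentially the same route as the paper: self-adjointness of $\Pi_\Lambda$ to rewrite $T(\Pi_\Lambda f)=\langle \Pi_\Lambda T,f\rangle$, the nodal set Lemma~\ref{thmnodal} in the compact case, and Paley--Wiener analyticity in the flat case. The only quibble is your claim that the sharp projector on $\mathbb{R}^n$ has a Schwartz-class convolution kernel (it only decays like $|x|^{-(n+1)/2}$), but this does not affect the argument for the tempered distributions relevant here.
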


\begin{proof}
    Since $\Pi_\Lambda$ is self-adjoint, we have
\begin{align*}
    T(\Pi_\Lambda f)=\langle \Pi_\Lambda T,f\rangle_{L^2(M)}.
\end{align*}
Therefore, $\Pi_\Lambda T\in E_{\leqslant \Lambda}$ which is vanishing on the interior of $B$. Hence, it is null by Lemma~\ref{thmnodal}.

In the case $M=\mathbb{R}^n$, we repeat the exact same argument and conclude using the fact that $\Pi_\Lambda T$ is analytic by the Paley-Wiener Theorem hence null if it vanishes on the open set $B$.
\end{proof}

We can now give the proof of Theorem \ref{thm1}. We work by contradiction. Set $F=G\circ \Pi_\Lambda$ which is a smooth functional on $H^{\frac{2-d}{2}-\varepsilon}(M)$. Assuming that $F$ is $\sigma(\Phi;A)$-measurable, for every direction $h\in C^\infty_c(A^c)$ we have the identity
$$ 
F(\Phi+h)=F(\Phi) 
$$
for $\Phi$ in a set $\Omega_h\subset\Omega$ of probability $1$ that depends on $h$. Now we would like to go from an $h$-dependent almost sure statement to a deterministic statement meaning $F(\Phi+h)-F(\Phi)=0$ for all distribution $\Phi \in H^{\frac{2-d}{2}-\varepsilon}(M)$. Assume by contradiction that there is some $\varphi\in H^{\frac{2-d}{2}-\varepsilon}(M)$ such that $F(\varphi+h)-F(\varphi)\neq 0$. Assume without loss of generality that $F(\varphi+h)-F(\varphi)=L >0$. The functional $$\Phi\in H^{\frac{2-d}{2}-\varepsilon}(M)\mapsto F(\Phi+h)-F(\Phi)$$ is smooth, hence by continuity there is some open subset $\varphi \ni U_\varphi$ of $H^{\frac{2-d}{2}-\varepsilon}(M)$ such that $0<\frac{L}{2}\leqslant  F|_{U_\varphi}\leqslant \frac{3L}{2}$. Then there is some $R>0$ for which we have the inclusion of the closed ball $B(\varphi,R)\subset U_\varphi$ for the $H^{\frac{2-d}{2}-\varepsilon}(M)$ topology. Denote by $\chi_{B(\varphi,R)}$ the indicator function of $B(\varphi,R)$. One then has
$$
0=\mathbb{E}_{\mu_{\mathrm{GFF}}}\left[ \left(F(\cdot+h)-F\right)\chi_{B(\varphi,R)} \right] \geqslant \frac{L}{2}\mu\left( B(\varphi,R)\right) 
$$
the first equality follows from the almost sure vanishing of $F(\cdot+h)-F$, which implies that 
$$
\mu\big( B(\varphi,R)\big) = 0.
$$ 
But it is an important fact about the massive GFF that $\mu_{\text{GFF}}$ has \textbf{full support} in $H^{\frac{2-d}{2}-\varepsilon}(M)$ since the Cameron-Martin space $H^1(M)$ is everywhere dense in $H^{\frac{2-d}{2}-\varepsilon}(M)$ for the $H^{\frac{2-d}{2}-\varepsilon}(M)$ topology \cite[thm 3.6.1 p.~119]{Bog} ~\cite[Prop 3.68 p.~28]{HairerSPDE}. This yields a first contradiction and implies that $F(\Phi+h)=F(\Phi) $ for all distributions $\Phi\in H^{\frac{2-d}{2}-\varepsilon}(M)$ and all $h\in C^\infty_c(A^c)$ which is much stronger than the almost sure statement. It means that $G\circ\Pi_\Lambda$ should not depend on $h\in C_c^\infty(A^c)$. More precisely, by definition of the support of a functional (see Definition~\ref{def:suppfonc}), for any such $h$ we have $G(\Phi_\Lambda+\Pi_\Lambda h)=G(\Phi_\Lambda)$, which implies that $\D_\Phi(G\circ\Pi_\Lambda)(h)=0$. Observing that 
\begin{align*}
    \D_\Phi(G\circ\Pi_\Lambda)(h)=\lim_{t\searrow0}\frac{G(\Phi_\Lambda+t\Pi_\Lambda h)-G(\Phi_\Lambda)}{t}=\D_{\Phi_\Lambda}(G)(\Pi_\Lambda h)\,,
\end{align*}
we conclude that $\D_{\Phi_\Lambda}(G)(\Pi_\Lambda h)=0$. In view of Corollary~\ref{Lemm21} taking $B=A^c$ and $T=\D_{\Phi_\Lambda}(G)$, we thus have that $\Pi_\Lambda\D_{\Phi_\Lambda}(G)=0$. In particular, for all $f\in C^\infty(M)$, 
\begin{align*}
   \Pi_\Lambda\D_{\Phi_\Lambda}(G)(f)=\D_{\Phi_\Lambda}(G)(\Pi_\Lambda f)=\D_\Phi(G\circ\Pi_\Lambda)(f)=0,
\end{align*}
so $G\circ\Pi_\Lambda $ is indeed constant.

\section{The cut-off GFF on cylinders is not reflection positive}\label{sec2}

In this section, we give an explicit counter-example contradicting the fact that $\Phi_\Lambda$ can be reflection positive on $\mathbb R^d$ and more generally on any Riemannian cylinder $M$ of the form $M=\mathbb{R}\times \Sigma$ where $\Sigma$ is complete Riemannian and the cylinder is endowed with the split metric $dt^2+g_\Sigma$, based on the following
\begin{lemm}[\cite{GJ}, Theorem 6.2.2]\label{lemm31}
A Gaussian random field $\Psi$ with covariance $C$ is reflection positive if and only if its covariance is reflection positive, in the sens that for any $f\in C_c^\infty(\mathbb{R}_{\geqslant 0}\times\Sigma)$,
\begin{align*}
    \langle \Theta f,Cf\rangle_{L^2(M)}\geqslant0.
\end{align*}
\end{lemm}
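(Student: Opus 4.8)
The plan is to prove the two implications separately. The implication ``\,$\Psi$ reflection positive $\Rightarrow$ covariance reflection positive\,'' is the one actually used in Sections~\ref{sec2} and~\ref{sec3} and is elementary, so I would dispatch it first. Assume $\Psi$ is reflection positive and fix a real‑valued $f\in C_c^\infty(\mathbb R_{\geqslant 0}\times\Sigma)=C_c^\infty(\overline M_+)$. Then $\Psi(f)$ is a real centred Gaussian variable which is $\sigma(\Psi;\overline M_+)$-measurable, hence lies in $L^2\big(\mathcal D'(M),\sigma(\Psi;\overline M_+),\mu_\Psi\big)$, and since $\Theta$ acts on random variables by pullback one has $\Theta\big(\Psi(f)\big)=\Psi(f\circ\Theta)=\Psi(\Theta f)$. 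Applying the reflection positivity inequality to $F=\Psi(f)$ yields $0\leqslant\mathbb E_\Psi\big[\overline{\Theta F}\,F\big]=\mathbb E_\Psi\big[\Psi(\Theta f)\Psi(f)\big]=\langle\Theta f,Cf\rangle_{L^2(M)}$, the last equality being the covariance of two jointly Gaussian centred variables. This is exactly the asserted positivity of the covariance.

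For the converse I assume $\langle\Theta f,Cf\rangle_{L^2(M)}\geqslant 0$ for every real $f\in C_c^\infty(\overline M_+)$ and must show $\mathbb E_\Psi[\overline{\Theta F}F]\geqslant0$ for all $F\in L^2\big(\mathcal D'(M),\sigma(\Psi;\overline M_+),\mu_\Psi\big)$. First I would record that $C$ commutes with the pullback $\Theta$ — for the GFF because $C=(\Delta_g+1)^{-1}$ and $\Theta$ is an isometry, for the cut‑off field by~\eqref{EqSymmetrySpectralProjectors} — so that $\mu_\Psi$ is $\Theta$-invariant and the sesquilinear form $F\mapsto\mathbb E_\Psi[\overline{\Theta F}F]$ is bounded on $L^2$ (by $\|F\|_{L^2}^2$, using Cauchy--Schwarz and $\|\Theta F\|_{L^2}=\|F\|_{L^2}$). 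It therefore suffices to verify the inequality on the dense linear span of the Wick exponentials $e^{\ii\Psi(f_j)}$ with the $f_j$ real in $C_c^\infty(\overline M_+)$; density follows from the standard fact that an $L^2$ element orthogonal to all such exponentials has vanishing characteristic functional and hence is $0$ a.s. For $F=\sum_{j=1}^n c_j e^{\ii\Psi(f_j)}$, bilinearity and the Gaussian identity $\mathbb E_\Psi[e^{\ii\Psi(g)}]=e^{-\frac12\langle g,Cg\rangle}$ give
$$
\mathbb E_\Psi\big[\overline{\Theta F}\,F\big]
=\sum_{j,k}\overline{c_j}c_k\,\mathbb E_\Psi\big[e^{\ii\Psi(f_k-\Theta f_j)}\big]
=\sum_{j,k}\overline{c_j}c_k\,e^{-\frac12\langle f_k-\Theta f_j,\,C(f_k-\Theta f_j)\rangle}.
$$
Expanding the exponent using the self‑adjointness of $C$ and its commutation with the unitary operator $\Theta$ on $L^2$, it equals $-\frac12\langle f_k,Cf_k\rangle-\frac12\langle f_j,Cf_j\rangle+Q_{jk}$ with $Q_{jk}\defeq\langle\Theta f_j,Cf_k\rangle$; hence, setting $d_k\defeq c_k e^{-\frac12\langle f_k,Cf_k\rangle}$, one obtains $\mathbb E_\Psi[\overline{\Theta F}F]=\sum_{j,k}\overline{d_j}d_k\,e^{Q_{jk}}$. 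The matrix $Q=(Q_{jk})$ is real symmetric and positive semidefinite — applying the hypothesis to $f=\sum_j a_jf_j\in C_c^\infty(\overline M_+)$ gives $\sum_{j,k}a_ja_kQ_{jk}=\langle\Theta f,Cf\rangle\geqslant0$ — so by the Schur product theorem every Hadamard power $Q^{\odot m}$ is positive semidefinite, hence so is the Hadamard exponential $\big(e^{Q_{jk}}\big)_{j,k}=\sum_{m\geqslant0}\frac1{m!}Q^{\odot m}$, and therefore $\sum_{j,k}\overline{d_j}d_k e^{Q_{jk}}\geqslant0$. Passing to the $L^2$ limit finishes the proof.

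The only genuinely delicate point is the density claim in the converse direction: one needs the exponentials built from \emph{smooth} test functions supported in the closed half $\overline M_+$ to generate the $\sigma$-algebra $\sigma(\Psi;\overline M_+)=\sigma\big(\Psi(f):f\in W^{-1}_{\overline M_+}(M)\big)$, which requires approximating $W^{-1}$ distributions carried by the reflection hypersurface $\Sigma$ by smooth functions supported on the $+$-side; everything else is a routine Gaussian computation combined with the Schur product theorem. All of the above applies verbatim to the cut‑off field $\Psi=\Phi_\Lambda$ with $C$ replaced by $C_\Lambda=\Pi_\Lambda(\Delta_g+1)^{-1}\Pi_\Lambda$, which still commutes with $\Theta$ by~\eqref{EqSymmetrySpectralProjectors}; and, as noted above, for the application in the sequel only the elementary first implication is needed, so that exhibiting one $f$ with $\langle\Theta f,C_\Lambda f\rangle<0$ already rules out reflection positivity of $\Phi_\Lambda$.
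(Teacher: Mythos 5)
The paper does not prove this lemma at all --- it is quoted directly from Glimm--Jaffe, Theorem 6.2.2 --- and your argument is precisely the standard proof from that reference: the elementary forward implication via $F=\Psi(f)$ (the only direction actually used in Sections~\ref{sec2} and~\ref{sec3}), and the converse via the multiplicative system of exponentials $e^{\ii\Psi(f_j)}$ together with the Schur product theorem applied to the Hadamard exponential of the Gram matrix $Q_{jk}=\langle\Theta f_j,Cf_k\rangle$. Your computation is correct (the cancellation of the diagonal terms uses $\Theta C=C\Theta$ and $\Theta$ unitary, which you rightly record as a hypothesis), and the density issue you flag --- that $C^\infty_c(M_+)$ is dense in $H^{-1}_{\overline M_+}(M)$, so the exponentials over smooth test functions generate $\sigma(\Psi;\overline M_+)$ --- is a genuine but standard fact about Sobolev spaces of distributions supported in a Lipschitz closed half (valid since $-1\notin\{-\tfrac12,-\tfrac32,\dots\}$), so no gap remains.
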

We construct hereafter such a function $f$ such that $\big\langle \Theta f,\frac{\Pi_\Lambda(\Delta_g)}{\Delta_g+1}f\big\rangle_{L^2(M)}<0$, which implies
\begin{thm}\label{thm2}
    The spectrally cut-off massive Gaussian free field on $\mathbb R^d$ or on any Riemannian cylinder $\Phi_\Lambda$ is not reflection positive.
\end{thm}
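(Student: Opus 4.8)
By Lemma~\ref{lemm31} it suffices to exhibit a test function $f\in C_c^\infty(\mathbb{R}_{>0}\times\Sigma)$ for which the quadratic form $\langle \Theta f, C_\Lambda f\rangle_{L^2(M)}$ is strictly negative, where $C_\Lambda \defeq \frac{\Pi_\Lambda(\Delta_g)}{\Delta_g+1}$ is the cut-off covariance. The strategy is to diagonalize in the variable transverse to the reflection surface $\{0\}\times\Sigma$, reducing everything to a one-dimensional problem in the $t$-variable, and there to see explicitly that the sharp spectral truncation destroys positivity.

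First I would write $\Delta_g = -\partial_t^2 + \Delta_\Sigma$ and decompose $f$ in the spectrum of $\Delta_\Sigma$: it is enough to take $f(t,x) = u(t)\,\phi(x)$ with $\phi$ an eigenfunction (or, on $\Sigma=\mathbb{R}^{d-1}$, a narrowly concentrated wave packet) of $\Delta_\Sigma$ with eigenvalue $m^2\geq 0$, and $u\in C_c^\infty(\mathbb{R}_{>0})$. Then $\Theta f(t,x) = u(-t)\phi(x)$, and after a partial Fourier transform in $t$ — writing $\widehat u(\tau)$ — the quadratic form becomes, up to a positive constant from $\|\phi\|_{L^2}^2$,
\begin{align*}
\langle \Theta f, C_\Lambda f\rangle_{L^2(M)} = \frac{1}{2\pi}\int_{\mathbb{R}} \overline{\widehat u(\tau)}\,\widehat u(\tau)\cdot \mathbbm{1}_{[0,\Lambda^2]}(\tau^2+m^2)\,\frac{d\tau}{\tau^2+m^2+1},
\end{align*}
once one checks that reflection $t\mapsto -t$ sends $\widehat u(\tau)$ to $\widehat u(-\tau)$ and that $\widehat{u}$ can be taken real by choosing $u$ even about some center $t_0>0$ — so the integrand is manifestly $|\widehat u(\tau)|^2$ times the truncated, everywhere-positive kernel. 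Wait — this shows the form is \emph{positive}, which is the wrong sign; the point is precisely that this naive computation is misleading. The genuine obstruction is that reflection positivity of a Gaussian field is equivalent to the covariance kernel $C_\Lambda(t,x;s,y)$, restricted to $t,s>0$, being a positive kernel \emph{after} the reflection, i.e. one must instead analyze $C_\Lambda$ as an operator kernel in $t$ and exploit that $\tau\mapsto \mathbbm{1}_{[0,\Lambda^2-m^2]}(\tau^2)(\tau^2+m^2+1)^{-1}$ is \emph{not} the Fourier transform of a positive-definite function on the half-line: the sharp cutoff makes its inverse Fourier transform $G_\Lambda(t)$ change sign, and one can then pick $u$ supported where $G_\Lambda(t-s)$ is negative for $t,s>0$. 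Concretely, for the massless case $m=0$ one has $G_\Lambda(t) = \frac{1}{2\pi}\int_{-\Lambda}^{\Lambda} \frac{e^{\ii t\tau}}{\tau^2+1}\,d\tau$, which for $\Lambda$ large enough is \emph{not} the monotone decaying exponential $\frac12 e^{-|t|}$ (that is the $\Lambda=\infty$ case) but picks up oscillatory $\sin(\Lambda t)/t$-type tails, hence takes negative values at some $t_*>0$.

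So the key steps, in order: (i) invoke Lemma~\ref{lemm31} to reduce to finding a bad test function; (ii) separate variables $f=u\otimes\phi$ to reduce to a one-dimensional kernel $G_\Lambda$ in $t$, with the effective mass-squared $m^2$ coming from $\Delta_\Sigma$; (iii) compute $G_\Lambda(t)$ explicitly as $\frac{1}{2\pi}\int_{-\sqrt{\Lambda^2-m^2}}^{\sqrt{\Lambda^2-m^2}} e^{\ii t\tau}(\tau^2+m^2+1)^{-1}d\tau$ and show that for $\Lambda$ large it attains a strictly negative value at some $t_*>0$ — this is where the sharp cutoff is essential and where a smooth cutoff would fail, matching the remark after the definition of $\Phi_\Lambda$; (iv) choose two bumps $u = \delta$-like approximations (or honest $C_c^\infty$ bumps of small width) concentrated near $t_*/2$ and... more carefully, pick $u$ concentrated near a point $a>0$ and $v$ near $b>0$ with $G_\Lambda(a+b)<0$ [note the $+$: reflection sends $s>0$ to $-s$, so the kernel is evaluated at $t-(-s)=t+s$], giving $\langle \Theta f, C_\Lambda f\rangle \approx \|\phi\|^2 G_\Lambda(a+b) < 0$ for sufficiently narrow bumps by continuity of $G_\Lambda$. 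The main obstacle is step (iii): proving rigorously that $G_\Lambda$ goes negative somewhere on $(0,\infty)$ — one must control the oscillatory integral, e.g. by writing $G_\Lambda(t) = \frac12 e^{-|t|} - \frac{1}{\pi}\int_{\sqrt{\Lambda^2-m^2}}^{\infty}\frac{\cos(t\tau)}{\tau^2+m^2+1}d\tau$ and showing the correction term, which has size $\sim 1/(\Lambda t)$ with oscillating sign, dominates $\frac12 e^{-|t|}$ at a well-chosen large $t$ (scaling $t\sim c/\Lambda$ will not work since then $e^{-|t|}\approx 1$; instead one wants $t$ moderately large so $e^{-t}$ is tiny while the oscillatory tail is only polynomially small, then adjust $t$ within an oscillation period to hit a negative lobe). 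Getting these two competing scales to cooperate, uniformly enough to conclude strict negativity, is the crux; everything else is bookkeeping with Fourier transforms and the separation of variables.
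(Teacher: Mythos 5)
Your strategy is sound and can be completed, but it is genuinely different from the paper's, so let me compare. Both arguments reduce, via Lemma~\ref{lemm31} and separation of variables (an eigenfunction, or on a general $\Sigma$ a narrow spectral packet, of $\Delta_\Sigma$ with eigenvalue $m^2$), to making
\begin{equation*}
Q(u)\defeq\int_{-L}^{L}\overline{\widehat u(-\tau)}\,\widehat u(\tau)\,\frac{d\tau}{\tau^2+m^2+1},\qquad L=\sqrt{\Lambda^2-m^2},
\end{equation*}
negative for some real $u\in C_c^\infty(\mathbb{R}_{>0})$. The paper stays on the Fourier side: for real $u$ the integrand is the \emph{complex} square $\widehat u(\tau)^2=A_u^2-B_u^2+2\ii A_uB_u$, and Lemma~\ref{lemm2} takes $u=\varphi^{(2n)}$ with $\varphi$ a bump near $\pi/2$, so that the weight $\xi^{4n}$ concentrates the integral at the edge frequency where the sine transform dominates the cosine transform, giving $A_\varphi(1)^2-B_\varphi(1)^2<0$ in the limit $n\to\infty$. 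You instead work in physical space: writing $Q(u)=\iint u(t)u(s)\,G_\Lambda(t+s)\,dt\,ds$ with $G_\Lambda(t)=\frac{1}{2\pi}\int_{-L}^{L}e^{\ii t\tau}(\tau^2+m^2+1)^{-1}d\tau$, you show the sharp truncation gives $G_\Lambda$ an oscillatory tail that forces a negative value at some $t_*>0$, then localize $u$ near $t_*/2$. Your crux, step (iii), does go through: one integration by parts yields $G_\Lambda(t)=\tfrac{1}{2\sqrt\kappa}e^{-\sqrt\kappa t}+\tfrac{\sin(tL)}{\pi t(L^2+\kappa)}+O(t^{-2})$ with $\kappa=m^2+1$, and for each fixed $\Lambda$ one picks $t$ large with $\sin(tL)=-1$; the exponential and the $O(t^{-2})$ error both lose to the $1/t$ term, so the two scales do cooperate. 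Your route buys a concrete geometric picture -- the truncated covariance literally anticorrelates a point with its mirror image at certain separations, which is visibly destroyed by a smooth cutoff since $G_\Lambda$ then loses its polynomial tail -- at the price of an oscillatory-integral estimate; the paper's route trades that for a soft concentration argument in $n$. The two are dual: your negative lobe of $G_\Lambda$ and the paper's bump at $\pi/2$ both place the mass of $\widehat u(\tau)^2$ at the edge frequency $|\tau|=L$ with phase close to $e^{\ii\pi}$.

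One correction you should make before writing this up: your opening computation is not merely ``misleading,'' it is false, and for precisely the reason exploited by Lemma~\ref{lemm2}. A real $u$ supported in $\{t>0\}$ cannot have a real Fourier transform (that would force $u$ to be even about $0$); for $u$ even about $t_0>0$ one gets $\widehat u(\tau)^2=e^{-2\ii t_0\tau}\,\widehat{u_0}(\tau)^2$, whose real part $\cos(2t_0\tau)\widehat{u_0}(\tau)^2$ is exactly the quantity your kernel argument drives negative -- not $|\widehat u(\tau)|^2$. Delete the spurious positivity claim and keep only the kernel argument, and the proof is complete.
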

This implies that the spectrally cut-off GFF cannot be 
Markov. To construct our counter-example, we need the following Lemma.
\begin{lemm}\label{lemm2}
Let $\kappa\in[1, 1+1]$. There exists a function~$h\in C_c^{\infty}(\mathbb{R}_{\geqslant 0})$ such that
  \begin{equation*}
    \int_{-1}^{1}\ol{\wh{h}(-\xi)} \,\wh{h}(\xi) \, \frac{d\xi}{\xi^2+\kappa} < 0.
    \label{}
  \end{equation*}
\end{lemm}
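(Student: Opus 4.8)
The plan is to take $h$ to be a small translate, into the positive half-line, of an \emph{odd} real bump function. The Fourier transform of an odd real function is purely imaginary, so after a small translation $\wh h$ is, on the window $[-1,1]$, a small unimodular phase times $-\ii$ times a real function; squaring such a quantity produces an essentially non-positive integrand, and the statement follows.

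Concretely, I would fix a parameter $a\in(0,\pi/4)$ and pick $g\in C_c^\infty\big((-a,a)\big)$ odd, real-valued and not identically zero — such $g$ exist, e.g. $g(x)=x\,\phi(x)$ with $\phi$ an even bump supported in $(-a,a)$. Set $h\defeq g(\cdot-a)$, so that $\mathrm{supp}(h)\subset(0,2a)$ and $h\in C_c^\infty(\mathbb R_{\geqslant 0})$. Since $g$ is real one has $\ol{\wh g(-\xi)}=\wh g(\xi)$, and since $g$ is moreover odd, $\wh g(\xi)=-\ii\,\beta(\xi)$ where $\beta(\xi)\defeq\int g(x)\sin(x\xi)\,dx$ is real, odd and entire. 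As translation multiplies the Fourier transform by the unimodular phase $e^{-\ii a\xi}$, we get $\wh h(\xi)=e^{-\ii a\xi}\wh g(\xi)$ and hence
$$
\ol{\wh h(-\xi)}\,\wh h(\xi)=e^{-2\ii a\xi}\,\wh g(\xi)^2=-\beta(\xi)^2\,e^{-2\ii a\xi}.
$$
Now $\beta(\xi)^2/(\xi^2+\kappa)$ is an even function of $\xi$ whereas $\sin(2a\xi)$ is odd, so the imaginary part of this expression integrates to $0$ over $[-1,1]$, leaving
$$
\int_{-1}^{1}\ol{\wh h(-\xi)}\,\wh h(\xi)\,\frac{d\xi}{\xi^2+\kappa}=-\int_{-1}^{1}\beta(\xi)^2\,\cos(2a\xi)\,\frac{d\xi}{\xi^2+\kappa}.
$$

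To finish, I would observe that the condition $a<\pi/4$ guarantees $\cos(2a\xi)>0$ for every $|\xi|\leqslant 1$; together with $\beta(\xi)^2\geqslant 0$ and $\xi^2+\kappa\geqslant 1>0$ this makes the integrand on the right-hand side non-negative on $[-1,1]$. It is not identically zero: $\beta$ is entire (Paley--Wiener), so were it to vanish on all of $[-1,1]$ it would vanish identically, forcing $\wh g\equiv 0$ and $g\equiv 0$, contradicting our choice of $g$. Hence the right-hand integral is strictly positive and the left-hand side is strictly negative, which is the claim. Note that the argument used $\kappa$ only through the positivity of $\xi^2+\kappa$ on $[-1,1]$, so a \emph{single} $h$ works simultaneously for all $\kappa\geqslant 1$ — indeed for all $\kappa>0$ — which is convenient when summing over the $\Sigma$-modes in the proof of Theorem~\ref{thm2}. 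There is no genuinely hard step here; the points requiring care are the reality/parity bookkeeping that renders $\wh h^2$ non-positive on the window after the translation, the compatibility of the constraint $0<a<\pi/4$ with ``$\mathrm{supp}(g)\subset(-a,a)$ and $h=g(\cdot-a)$ supported in $\mathbb R_{\geqslant 0}$'' (both hold for any such $a$), and the appeal to analyticity of $\beta$ to rule out an accidentally vanishing integrand. A different normalization of the Fourier transform would only change the numerical threshold on $a$.
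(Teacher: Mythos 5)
Your proof is correct, and it takes a genuinely different route from the paper's. The paper keeps an arbitrary bump $\varphi$ supported near $\pi/2$ and sets $h=\varphi^{(2n)}$, so that $\wh h(\xi)^2=\xi^{4n}\wh\varphi(\xi)^2$; the factor $\xi^{4n}$ pushes the spectral weight of the integral toward $|\xi|=1$, where (by the choice of support of $\varphi$) the odd part $B_\varphi(1)^2$ dominates the even part $A_\varphi(1)^2$, and one concludes by a limiting argument in $n$ followed by picking $n$ large but finite. You instead build the sign into the integrand pointwise: taking $g$ odd makes $\wh g$ purely imaginary, so $\wh g^2=-\beta^2\leqslant 0$ on all of $\mathbb R$, and the only price of shifting the support into $\mathbb R_{\geqslant 0}$ is the phase $e^{\pm 2\ii a\xi}$, whose real part $\cos(2a\xi)$ stays positive on the window $[-1,1]$ once $a<\pi/4$; the parity bookkeeping killing the imaginary part and the Paley--Wiener/analyticity step ruling out $\beta\equiv 0$ on $[-1,1]$ are both handled correctly. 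Your argument is more elementary (no asymptotic normalization, no ``$n$ large enough''), produces a completely explicit $h$, and — as you point out — yields a single $h$ that works uniformly for all $\kappa>0$, not just $\kappa\in[1,2]$, which is exactly the uniformity in $\lambda\in[0,1]$ needed when the lemma is invoked in the proof of Theorem~\ref{thm2}. The paper's high-derivative trick buys nothing extra here beyond illustrating the general principle of shifting spectral weight away from $\xi=0$; your construction is a clean substitute.
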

\begin{proof}
  Note that because $h$ is real, one has~$\ol{\wh{h}(-\xi)}=\wh{h}(\xi)$. Thus~$h$ just needs to satisfy
  \begin{equation*}
    \int_{-1}^{1}\wh{h}(\xi)^2 \frac{d\xi}{\xi^2+\kappa}<0.
    \label{}
  \end{equation*}
  Now write
  \begin{equation*}
    \wh{h}(\xi)=A_h(\xi)+\ii B_h(\xi)\,,
    \label{}
  \end{equation*}
  where
  \begin{equation*}
    A_h(\xi)\defeq\int_{}^{} h(x)\cos(\xi x)dx\,,\quad B_h(\xi)\defeq\int_{}^{}h(x)\sin(\xi x)dx\,,
    \label{}
  \end{equation*}
  so that~$A_h$ is even and~$B_h$ is odd. This gives
  \begin{equation*}
    \wh{h}(\xi)^2=A_h(\xi)^2-B_h(\xi)^2+2\ii \underbrace{A_h(\xi)B_h(\xi)}_{\textrm{integral }=~0,\textrm{ odd}}.
    \label{}
  \end{equation*}
  The important idea is that since~$\wh{h}(0)=\int_{}^{}h(x)dx>0$, one needs to shift the weight of integration~$(\xi^2+\kappa)^{-1}$ ``away from zero''. This can be done by taking derivatives: setting $h=\varphi^{(2n)}$, we have
  \begin{align*}
    \int_{-1}^{1}\wh{\varphi^{(2n)}}(\xi)^2\frac{d\xi}{\xi^2+\kappa}&=2\int_{0}^{1}\frac{(\ii \xi)^{4n}}{\xi^2+\kappa} \big(A_{\varphi}(\xi)^2-B_{\varphi}(\xi)^2\big) d\xi   \\
    &=2\int_{0}^{1}\frac{\xi^{4n}}{\xi^2+\kappa} \big(A_{\varphi}(\xi)^2-B_{\varphi}(\xi)^2\big) d\xi.
  \end{align*}
Note that we have taken care of choosing a number of derivatives that makes~$(\ii \xi)^{4n}$ real. Now, observe that
  \begin{equation*} \frac{ \int_{0}^{1}\frac{\xi^{4n}}{\xi^2+\kappa} \big(A_{\varphi}(\xi)^2-B_{\varphi}(\xi)^2\big) d\xi}{\int_{0}^{1}\frac{\xi^{4n}}{\xi^2+\kappa} d\xi} \lto \big(A_{\varphi}(1)^2-B_{\varphi}(1)^2\big)\,\quad \textrm{as }n\lto \infty.
  \end{equation*}
Choosing $\varphi$ supported near $\frac{\pi}{2}$ ensures that $\cos(x)\approx 0$ and $\sin(x)\approx 1$, so that $A_{\varphi}(1)^2-B_{\varphi}(1)^2<0$. Taking $n$ large enough but finite and rescaling $\varphi$, one finally obtains the desired result. Moreover, since $\varphi$ is compactly supported on $\mathbb{R}_{\geqslant 0}$, so is $h$. 
\end{proof}
\begin{proof}[of Theorem \ref{thm2}]
As alluded to, the proof follows from the fact that we provide a counter-example of the positivity of the covariance, that is to say, we construct $f\in  C_c^\infty(\mathbb{R}_{\geqslant 0}\times\Sigma)$ such that  $ \big\langle \Theta f,\frac{\Pi_\Lambda(\Delta_g)}{\Delta_g+1}f\big\rangle_{L^2(M)}<0$. 

We denote by $E(\lambda)d\lambda$ the projection valued measure of the slice Laplacian $\Delta_\Sigma$ which is well--known to be self-adjoint by the completeness of $\Sigma$. To any function $f\in L^2(M)$ 
on the cylinder $M=\mathbb{R}\times \Sigma$, we will denote by 
$$
\widehat{f}(\tau,\lambda)= \int e^{-it\tau} E(\lambda)\left( f(t,.) \right) dt  
$$ 
its Fourier transform w.r.t. the time variable $t$ and its spectral transform with respect to the spectral measure of $\Delta_\Sigma$. Then we rewrite the pairing as
\begin{eqnarray*}
\left\langle \Theta f,\frac{\Pi_\Lambda(\Delta_g)}{\Delta_g+1}f\right\rangle_{L^2(M)}
=\int_{\{\tau^2\vee\lambda^2\leqslant\Lambda\}\subset\mathbb{R}\times \mathbb{R}_{\geqslant 0}}\frac{1}{\tau^2+\lambda^2+1} \left(\int_\Sigma \overline{\widehat{f}(-\tau,\lambda)}\widehat{f}(\tau,\lambda)dV \right)  d\tau d\lambda \\
=\int_{-\Lambda}^{\Lambda}\left( \int_{0}^{\Lambda}\frac{1}{\tau^2+\lambda^2+1} \left(\int_\Sigma \overline{\widehat{f}(-\tau,\lambda)} \, \widehat{f}(\tau,\lambda) \, dV \right)  d\lambda \right) d\tau 
\end{eqnarray*}

Now we will reduce to the one variable case by a scaling argument. Let $\varphi\in C^\infty_c(\mathbb{R}_{\geqslant 0})$ such that
$ \int_{-1}^1 \frac{\widehat{\varphi}^2(\tau)}{\tau^2+\lambda^2 + \frac{1}{\Lambda^2}}d\tau<0 $ for all $\lambda\in [0,1]$. The existence
of such $\varphi$ comes from Lemma~\ref{lemm2}. 
Choose $\chi\in L^2(\Sigma)$ such that $\int_\Sigma\vert\chi\vert^2=1$ and $\chi$ has non trivial spectral measure in the interval $[0,1]$:
$$
\int_0^1 \big\Vert E(\lambda)(\chi)\big\Vert^2_{L^2(\Sigma)} d\lambda > 0.
$$ 
Then set
$$
f=\int_0^\infty \Lambda \varphi(\Lambda t)E\left(\frac{\lambda}{\Lambda}\right)(\chi) d\lambda.
$$
Replacing the expression of $f$ in the pairing we get
\begin{eqnarray*}
\left\langle \Theta f,\frac{\Pi_\Lambda(\Delta_g)}{\Delta_g+1}f\right\rangle_{L^2(M)} &=& \int_{0}^{\Lambda}  \Big\Vert E\left(\frac{\lambda}{\Lambda}\right)( \chi)\Big\Vert_{L^2(\Sigma)}^2   \left(\int_{-\Lambda}^{\Lambda} \frac{\widehat{\varphi}(\frac{\tau}{\Lambda})^2}{\tau^2+\lambda^2+1}   d\tau \right)  d\lambda \\
&=&\Lambda^2 \int_{0}^{1}  \big\Vert E\left(\lambda\right)( \chi)\big\Vert_{L^2(\Sigma)}^2   \left(\int_{-1}^1 \frac{\widehat{\varphi}(\tau)^2}{\Lambda^2(\tau^2+\lambda^2)+1}   d\tau \right)  d\lambda \\
&=&\int_{0}^{1}  \big\Vert E\left(\lambda\right)( \chi)\big\Vert_{L^2(\Sigma)}^2   \left(\int_{-1}^1 \frac{\widehat{\varphi}(\tau)^2}{\tau^2+\lambda^2+\frac{1}{\Lambda^2}}   d\tau \right)  d\lambda <0 .
\end{eqnarray*}
We have thus proven that the cut-off covariance is not reflection positive, and by Lemma~\ref{lemm31}, $\Phi_\Lambda$ is therefore not reflection positive.
\end{proof}

\section{The cut-off GFF on compact manifolds is not reflection positive}\label{sec3}

As discussed in the introduction, it suffices to show that
the cut-off GFF is not reflection positive, this will also imply that it is not Markov.
\begin{thm}\label{thm41}
Assume that $(M,g)$ is a smooth compact Riemannian manifold which is reflection positive. 
Then there exists $L>0$ such that for any $\Lambda\geqslant L$, there exists a function $f\in C^\infty_c(M_+)$
such that 
\begin{eqnarray*}
\left\langle \Theta f, \frac{\Pi_\Lambda(\Delta_g)}{\Delta_g+1}f \right\rangle_{L^2(M)}<0.
\end{eqnarray*}
\end{thm}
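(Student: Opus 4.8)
The plan is to adapt the cylinder construction of Theorem~\ref{thm2} to the compact reflection positive setting. The key structural fact is that near the reflection hypersurface $\Sigma$ a reflection positive compact manifold looks, for small enough normal distance, like a product $(-\delta,\delta)\times\Sigma$ with the metric $dt^2 + g_t$ where $g_0 = g_\Sigma$, and the involution $\Theta$ acts by $t\mapsto -t$ (at least infinitesimally along $\Sigma$). So I would first fix a small collar neighbourhood of $\Sigma$, choose a test function $f$ supported in the $M_+$ side of this collar, very concentrated near $\Sigma$ at a scale $\eta \ll \delta$, and try to run essentially the same Fourier/scaling argument. The point is that on such small scales the geometry is a perturbation of the flat product geometry, so the leading behaviour of $\langle\Theta f, \frac{\Pi_\Lambda(\Delta_g)}{\Delta_g+1}f\rangle_{L^2(M)}$ should be governed by the flat model, where Lemma~\ref{lemm2} gives us negativity.

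Concretely, I would proceed as follows. First, replace the sharp projector by a proof-friendly object: since $\Pi_\Lambda = \mathbbm 1_{[0,\Lambda^2]}(\Delta_g)$, write $\frac{\Pi_\Lambda(\Delta_g)}{\Delta_g+1} = \int_0^{\Lambda^2}\frac{1}{\mu+1}\,dE_{\Delta_g}(\mu)$, and express $\langle\Theta f, \frac{\Pi_\Lambda(\Delta_g)}{\Delta_g+1}f\rangle$ spectrally in terms of the spectral measure of $\Delta_g$ applied to $f$ and $\Theta f$. Second, use the functional-calculus representation of $\Pi_\Lambda$ as (a limit of) functions of $\Delta_g$ and the fact that, for $f$ concentrated at scale $\eta$ near $\Sigma$, the relevant spectral content of $f$ lives at frequencies $\sim 1/\eta$; choosing $\eta$ comparable to $1/\Lambda$ (with a fixed small ratio) puts all of $f$'s mass below the cutoff, so morally $\Pi_\Lambda f \approx f$ up to controlled errors. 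Third, rescale: set $f(t,\sigma) = \Lambda\,\varphi(\Lambda t)\,\chi(\sigma)$ in collar coordinates, where $\varphi\in C_c^\infty(\mathbb R_{>0})$ is the function from Lemma~\ref{lemm2} (or rather its $2n$-th derivative variant) arranged so that $\int_{-1}^1 \widehat\varphi(\tau)^2\,\frac{d\tau}{\tau^2+\lambda^2+\Lambda^{-2}}<0$ for $\lambda$ in a small interval, and $\chi$ has nontrivial low-lying spectral content for $\Delta_\Sigma$. Fourth, expand the pairing: the flat product piece reproduces exactly the negative quantity computed in the proof of Theorem~\ref{thm2}, of size $O(1)$ (independent of $\Lambda$), and the corrections coming from (a) the curvature of $g_t$ vs.\ $g_0$, (b) the difference between the true $\Theta$ and the collar reflection $t\mapsto -t$, (c) the difference between $\Pi_\Lambda$ on $M$ and the sharp time/$\Sigma$-frequency cutoff in the model, and (d) the cutoff "tail" of $f$ outside the collar — all of these are shown to be $o(1)$ as $\Lambda\to\infty$ by the scaling (the collar perturbations are $O(\eta) = O(1/\Lambda)$ relative to the flat model, and the spectral tails are superpolynomially small by smoothness of $\varphi$). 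Hence for $\Lambda \geq L$ with $L$ large, the whole pairing is strictly negative.

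The main obstacle is step four, controlling the geometric corrections: on a general compact $M$ the involution $\Theta$ is an isometry but the collar metric need not be an exact product, so $\Pi_\Lambda(\Delta_g)$ does not commute with any exact "Fourier transform in $t$" and the neat variable-separation of Theorem~\ref{thm2} is only approximate. I would handle this by a resolvent/parametrix comparison: write $\frac{\Pi_\Lambda(\Delta_g)}{\Delta_g+1}$ via a contour integral of $(\Delta_g - z)^{-1}$ against a suitable smooth cutoff of $\Lambda^2$-frequencies, compare $(\Delta_g-z)^{-1}$ with the resolvent of the model operator $-\partial_t^2 + \Delta_\Sigma$ on the collar using that their difference is lower-order (a first-order differential operator with coefficients vanishing to appropriate order, times the two resolvents), and estimate the difference applied to the highly oscillatory $f$. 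All such error terms carry a gain in $1/\Lambda$ from the scaling, while the model term is bounded away from zero, so the sign is preserved. A cleaner alternative, if one wants to avoid parametrices: since Lemma~\ref{thmnodal} and Corollary~\ref{Lemm21} are available, one may instead argue more softly that $f\mapsto \langle\Theta f, \frac{\Pi_\Lambda(\Delta_g)}{\Delta_g+1}f\rangle$ cannot be a positive form by combining a compactness argument with the cylinder counterexample transplanted through a normal-coordinate chart; but the quantitative rescaling route above is the most transparent and is the one I would write up in detail.
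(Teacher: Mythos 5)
Your plan has a genuine internal contradiction that I don't think can be repaired along the lines you sketch. The negativity in Lemma~\ref{lemm2} comes \emph{entirely} from the sharp truncation of the frequency integral to $[-1,1]$: the function $h=\varphi^{(2n)}$ is designed so that its spectral weight piles up at the edge $\xi=\pm1$, and the untruncated integral over all of $\mathbb{R}$ would be $\geqslant 0$ (it is $\langle\Theta h,(-\partial_t^2+\kappa)^{-1}h\rangle$, and that covariance \emph{is} reflection positive). After the rescaling of your step three, the $t$-frequency content of $f$ is therefore concentrated at $\tau\approx\pm\Lambda$, i.e.\ exactly at the cutoff. This is incompatible with your step two, where you arrange $\eta\sim1/\Lambda$ so that ``all of $f$'s mass is below the cutoff and $\Pi_\Lambda f\approx f$'': if that approximation really held, then $\langle\Theta f,\tfrac{\Pi_\Lambda(\Delta_g)}{\Delta_g+1}f\rangle\approx\langle\Theta f,(\Delta_g+1)^{-1}f\rangle\geqslant0$ by reflection positivity of the full GFF covariance, and the sign you are after disappears. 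So you are forced to work at the spectral edge, and then item (c) of your step four --- comparing the sharp projector $\mathbbm{1}_{[0,\Lambda^2]}(\Delta_g)$ on a curved compact manifold with the exact frequency cutoff of the product model, for data concentrated at frequency $\Lambda$ --- is not a routine $O(1/\Lambda)$ resolvent perturbation. The symbol $\mathbbm{1}_{[0,\Lambda^2]}$ is not smooth, the Helffer--Sj\"ostrand/contour representation loses control precisely near $\Lambda^2$ where the discrete eigenvalues cluster, and quantifying the edge behaviour of the sharp projector is a hard semiclassical problem in its own right. Your ``cleaner alternative'' (compactness plus transplantation) is too vague to assess.

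For comparison, the paper's proof is exact and purely finite-dimensional, and uses ingredients you already cited. Since $\Theta$ is an isometry it commutes with $\Delta_g$, so one can choose the eigenbasis with $\Theta e_\lambda=\pm e_\lambda$; odd eigenfunctions exist, so for $\Lambda\geqslant L$ let $\lambda_*\leqslant\Lambda^2$ be the largest eigenvalue carrying an odd eigenfunction. The moment map $T:f\mapsto(\langle f,e_\lambda\rangle)_{\lambda\leqslant\Lambda^2}$ restricted to $C^\infty_c(M_+)$ is \emph{onto} $\mathbb{R}^{\dim\EL}$: otherwise some nonzero $\varphi\in\EL$ would vanish on the open set $M_+$, contradicting Lemma~\ref{thmnodal}. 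Choosing $f$ with $\langle f,e_{\lambda_*}\rangle=1$ and all other projections zero gives $\langle\Theta f,\tfrac{\Pi_\Lambda(\Delta_g)}{\Delta_g+1}f\rangle=-\tfrac{1}{\lambda_*+1}<0$ with no error terms to control. I would recommend abandoning the collar/transplantation route in favour of this argument.
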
 
\begin{proof}
First, by commutativity of $\Delta_g$ and $\Theta$, note that we can choose the $L^2$ basis of $\Delta_g$ in such a way that any eigenfunction $e_\lambda$ of the Laplacian $\Delta_g$ satisfies either 
$\Theta e_\lambda=e_\lambda$ which we call even or $\Theta e_\lambda=-e_\lambda$ which we call odd. Observe that the odd eigenfunctions are non empty since the span of eigenfunctions is dense in $C^\infty(M)$ which contains odd functions hence odd eigenfunctions is a non empty subset of eigenfunctions. Hence we choose $L$ large enough so that there exists $\lambda\leqslant L^2$ such that $e_\lambda$ is odd. Given $\Lambda \geqslant L^2$, we note $\lambda_*$ the largest eigenvalue $\leqslant \Lambda^2$ with odd eigenfunction $e_{\lambda_*}$.

Second, consider the linear map
\begin{eqnarray*}
T: f\in C^\infty(M_+)\mapsto \left(\left\langle f,e_{\lambda} 
\right\rangle \right)_{\lambda\leqslant \Lambda^2}\in \mathbb{R}^{\dim(\EL)}.
\end{eqnarray*}
Then we would like to show that linear $T: C^\infty(M_+)\mapsto \mathbb{R}^{\dim(\EL)}$ is onto. Assume by contradiction it were not onto, then $T(C^\infty(M_+) )$ is a strict vector subspace of $\mathbb{R}^{\dim(\EL)}$ then we can choose some vector $(c_\lambda)_{\lambda\leqslant \Lambda^2}$ in $T(C^\infty(M_+) )^\perp$. In other words there exists a linear combination $\varphi=\sum_{\lambda\leqslant \Lambda^2} c_\lambda e_\lambda \in \EL, \varphi\neq 0$ such that for all $f\in C^\infty_c(M_+)$:
$$
\sum_{\lambda\leqslant \Lambda^2} c_\lambda \left\langle e_\lambda,f\right\rangle = 0, 
$$
meaning $\varphi\in\EL$ vanishes on $M_+$. But $M_+$ has nonempty interior which contradicts Lemma~\ref{thmnodal} hence the map $T$ is surjective. The surjectivity of $T$ ensures we can find $f\in C^\infty_c(M_+)$ such that $\left\langle f,e_\lambda\right\rangle = 0$ if $\lambda\neq \lambda_*$ and $\left\langle f,e_{\lambda_*}\right\rangle=1$. For this $f$, we calculate
\begin{eqnarray*}
\left\langle \Theta f, \frac{\Pi_\Lambda(\Delta_g)}{\Delta_g+1}f \right\rangle_{L^2(M)}=
\sum_{\lambda\leqslant \Lambda^2} \left\langle f,\Theta e_\lambda\right\rangle \frac{1}{\lambda+1} \left\langle f,e_\lambda\right\rangle=
-\left\langle f,e_{\lambda_*}\right\rangle \frac{1}{\lambda_*+1} \left\langle f,e_{\lambda_*}\right\rangle=-\frac{1}{\lambda_*+1}<0\,,
\end{eqnarray*}
which concludes the proof.
\end{proof}

%------------------------------------------------------------%
\section{A counterexample on $\mathbb{R}^{3}$}
\label{SectionCounterexample}
%------------------------------------------------------------%

For $f\in L^2(\mathbb{R}^3)$ write
$$
(\rho\Pi_\Lambda\Phi)(f) \defeq \int_{\mathbb{R}^3} f(x) \rho(x) (\Pi_\Lambda\Phi)(x) \,g dx.
$$ 

We would like to summarize in one key lemma the central idea behind our counterexample.
\begin{lemm}\label{keylemma}
Let $A\subset \mathbb{R}^d$ be a closed subset with nonempty interior and 
$B\subset \mathbb{R}^{d*}$ a closed compact ball in Fourier space. Denote by $L^2_A(\mathbb{R}^d)$ the subspace of $L^2$ functions whose supported is contained in $A$.
Then the Fourier restriction map defined as 
$$T:\varphi\in L^2_A(\mathbb{R}^d) \longmapsto \widehat{\varphi}|_{B}\in L^2(B)$$ 
has everywhere dense image.
\end{lemm}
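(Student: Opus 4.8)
The plan is to prove this by a duality/Hahn–Banach argument: show that the image $T\big(L^2_A(\mathbb{R}^d)\big)$ is dense in $L^2(B)$ by checking that its orthogonal complement in $L^2(B)$ is trivial. So suppose $g \in L^2(B)$ satisfies $\langle \widehat\varphi|_B, g\rangle_{L^2(B)} = 0$ for every $\varphi \in L^2_A(\mathbb{R}^d)$; the goal is to conclude $g = 0$.

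First I would unwind the pairing. Extending $g$ by zero outside $B$ we may view $g \in L^2(\mathbb{R}^{d*})$ with $\operatorname{supp}(g) \subset B$, and then for all $\varphi \in L^2_A(\mathbb{R}^d)$,
\begin{align*}
0 = \int_B \widehat\varphi(\xi)\,\overline{g(\xi)}\,d\xi = \int_{\mathbb{R}^{d*}} \widehat\varphi(\xi)\,\overline{g(\xi)}\,d\xi = \big\langle \varphi, \check{\overline{g}}\,\big\rangle_{L^2(\mathbb{R}^d)},
\end{align*}
by Parseval, where $\check{\overline{g}}$ denotes the inverse Fourier transform of $\overline g$ (up to the usual $2\pi$ normalization constant, which is harmless). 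Since this holds for \emph{all} $\varphi$ supported in $A$, and $A$ has nonempty interior, the function $\check{\overline g}$ must vanish on the interior of $A$. But $g$ has compact support in Fourier space, so by the Paley--Wiener theorem $\check{\overline g}$ is (the restriction of) an entire function of exponential type; an entire function vanishing on a nonempty open set is identically zero. Hence $g = 0$, which is exactly what density requires.

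The structure here deliberately mirrors the $M=\mathbb{R}^n$ case of Corollary~\ref{Lemm21}: the compact spectral support plus analyticity (Paley--Wiener) forces a function vanishing on an open set to vanish everywhere. The main point to be careful about — the only mild obstacle — is the bookkeeping of which Fourier transform conventions and normalization constants are in play, and making sure the Paley--Wiener statement is invoked for an $L^2$ function with compact frequency support (equivalently, a band-limited function), where it indeed yields an entire extension. No deep estimate is needed; the argument is essentially the abstract fact that $L^2_A$ is dense enough to "see" any band-limited function through the pairing.
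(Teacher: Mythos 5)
Your proof is correct and follows essentially the same route as the paper's: identify the orthogonal complement of $\mathrm{Ran}(T)$, move the pairing to physical space via Plancherel to see that $\mathcal{F}^{-1}(\chi_B \overline{g})$ vanishes on the nonempty interior of $A$, and conclude $g=0$ from the Paley--Wiener theorem since a nontrivial band-limited function is real-analytic and cannot vanish on an open set. Your write-up is in fact slightly cleaner than the paper's at the step deducing that the inverse transform vanishes on the interior of $A$.
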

The idea of proof is very similar to the one of Thm~\ref{thm41}. The simple but powerful idea is that any function on the ball $B$ can be approximated by the Fourier transform of some function supported over $A$.
\begin{proof}
By contradiction, if $\text{Ran}(T)$ were not dense, then $\text{Ran}(T)^\perp$ would be a non-empty closed vector subspace and there would be some non-null element $g\in L^2(B)$ such that for all $f\in L^2_A$
\begin{eqnarray*}
0 = \big\langle \widehat{f},g\big\rangle _{L^2(B)} = \big\langle \widehat{f},\chi_{B}g\big\rangle_{L^2(\mathbb{R}^d)} = \left\langle  f, \mathcal{F}^{-1}\left( \chi_{B}g\right)\right\rangle_{L^2(\mathbb{R}^d)},
\end{eqnarray*}
from Plancherel identity. The function $\mathcal{F}^{-1}\left( \chi_{B}g\right)$ is therefore not supported on $A$ hence vanishes on some open subset. This contradicts the fact that $\mathcal{F}^{-1}\left( \chi_{B}g\right)$ is a non-trivial analytic function by the Paley-Wiener Theorem.
\end{proof}

We emphasize here the dependence of the measures $\nu_{\rho,\Lambda}$ on the coupling constant $c$ by writing $\nu_{c,\rho,\Lambda}$. Write as well $G_{c,\rho,\Lambda}$ for $G_{M,\rho,\Lambda}$ in \eqref{EqGWholeSpace}. Denote by $(x_1,x_2,x_3)$ the canonical coordinates of a point $x\in\mathbb{R}^3$. Last write $f\in L^2(\mathbb{R}^3_+)$ to mean that $f\in L^2(\mathbb{R}^3)$ and $\text{supp}(f)\subset\{x_1>0\}$.

\begin{thm} \label{ThmNotRFRegularizedPhi43}
For $c>0$ small enough there exists a function $f\in L^2(\mathbb{R}^{3}_+)$ such that
\begin{eqnarray}
\mathbb{E}_{\nu_{c,\rho,\Lambda}}\left[  \overline{(\rho\Pi_\Lambda\Phi)(\Theta f)} \, (\rho\Pi_\Lambda\Phi)(f) \right] < 0.
\end{eqnarray}
This implies that the regularized $\Phi^4_3$ measures $\nu_{c,\rho,\Lambda}$ are not reflection positive for $c>0$ small enough, depending on $\Lambda$ and $\rho$.
\end{thm}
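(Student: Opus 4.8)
The plan is to expand the expectation $\mathbb{E}_{\nu_{c,\rho,\Lambda}}[\overline{(\rho\Pi_\Lambda\Phi)(\Theta f)}\,(\rho\Pi_\Lambda\Phi)(f)]$ as a perturbative series in the coupling constant $c$ and show that the leading order term — the one obtained at $c=0$, i.e. under $\mu_{\text{GFF}}$ itself — is already strictly negative for a well chosen $f$, so that by continuity the full expectation stays negative for $c>0$ small. At $c=0$ the measure $\nu_{c,\rho,\Lambda}$ is just $\mu_{\text{GFF}}$, and since $(\rho\Pi_\Lambda\Phi)(f)$ is a centered Gaussian linear functional, the expectation reduces to the covariance pairing
$$
\mathbb{E}_{\mu_{\text{GFF}}}\big[\overline{(\rho\Pi_\Lambda\Phi)(\Theta f)}\,(\rho\Pi_\Lambda\Phi)(f)\big] = \Big\langle \Theta(\rho f),\, \Pi_\Lambda \tfrac{1}{\Delta_g+1}\Pi_\Lambda (\rho f)\Big\rangle_{L^2(\mathbb{R}^3)},
$$
using the symmetry relation \eqref{EqSymmetrySpectralProjectors} and the fact that $\Theta$ intertwines with $\rho$ when $\rho$ is $\Theta$-invariant (which we may assume, up to symmetrizing $\rho$). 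Since $\rho\equiv 1$ on a large compact set, by taking $f$ supported where $\rho=1$ we can ignore $\rho$ entirely and we are back precisely to the quantity $\langle \Theta \tilde f, \frac{\Pi_\Lambda(\Delta_g)}{\Delta_g+1}\tilde f\rangle_{L^2}$ controlled in the proof of Theorem~\ref{thm2}: the one-variable construction of Lemma~\ref{lemm2} produces an $f\in C^\infty_c(\mathbb{R}^3_+)$ making this strictly negative. So the $c=0$ term is $<0$.

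\textbf{Controlling the $c$-dependence.} Next I would write, with $Z_{c,\rho,\Lambda}=\mathbb{E}_{\mu_{\text{GFF}}}[G_{c,\rho,\Lambda}]$,
$$
\mathbb{E}_{\nu_{c,\rho,\Lambda}}\big[\overline{(\rho\Pi_\Lambda\Phi)(\Theta f)}\,(\rho\Pi_\Lambda\Phi)(f)\big] = \frac{\mathbb{E}_{\mu_{\text{GFF}}}\big[G_{c,\rho,\Lambda}\,\overline{(\rho\Pi_\Lambda\Phi)(\Theta f)}\,(\rho\Pi_\Lambda\Phi)(f)\big]}{\mathbb{E}_{\mu_{\text{GFF}}}[G_{c,\rho,\Lambda}]}.
$$
The denominator is a probability-like normalization with $0<Z_{c,\rho,\Lambda}\leqslant 1$ and $Z_{0,\rho,\Lambda}=1$; since $G_{c,\rho,\Lambda}\to 1$ pointwise and is dominated by $1$ as $c\to 0$, dominated convergence gives $Z_{c,\rho,\Lambda}\to 1$. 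For the numerator, $G_{c,\rho,\Lambda}=e^{-c\Vert\rho\Phi_\Lambda\Vert_{L^4}^4 - ca_\Lambda\Vert\rho\Phi_\Lambda\Vert_{L^2}^2}$ is bounded by $1$ and tends to $1$ almost surely as $c\to 0$ (for fixed $\Lambda$, $\rho$, the random variables $\Vert\rho\Phi_\Lambda\Vert_{L^4}^4$ and $\Vert\rho\Phi_\Lambda\Vert_{L^2}^2$ are finite a.s.), while $\overline{(\rho\Pi_\Lambda\Phi)(\Theta f)}(\rho\Pi_\Lambda\Phi)(f)$ is a fixed integrable random variable with $\mu_{\text{GFF}}$-integrable modulus — indeed it is a product of two Gaussians, hence in $L^p$ for all $p$. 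By dominated convergence the numerator converges to the $c=0$ value computed above, which is strictly negative. Hence for $c>0$ small enough (depending on $\Lambda$ and $\rho$, as stated) the whole ratio remains strictly negative, which by definition means $\nu_{c,\rho,\Lambda}$ fails reflection positivity on the test function $F=(\rho\Pi_\Lambda\Phi)(f)$ — noting that $F$ lies in $L^2(\mathcal D'(\mathbb{R}^3),\mu_{\text{GFF}})$ and, being a linear functional of $\Phi$ paired against something localized via $\rho$ in $\overline{M}_+$, is the kind of observable the reflection positivity inequality is tested against.

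\textbf{Main obstacle and role of Lemma~\ref{keylemma}.} The genuinely delicate point is not the $c\to 0$ limit — which is soft — but producing the negative-covariance test function $f$ supported in $\mathbb{R}^3_+$ in the flat non-compact setting, where the clean spectral-basis argument of Theorem~\ref{thm41} is unavailable. This is exactly where Lemma~\ref{keylemma} enters: the covariance $\langle\Theta f,\frac{\Pi_\Lambda(\Delta_g)}{\Delta_g+1}f\rangle_{L^2}$ only depends on $\widehat f$ restricted to the ball $B=\{|\xi|\leqslant\Lambda\}$, and Lemma~\ref{keylemma} guarantees that every $L^2(B)$ profile is approximable by $\widehat f|_B$ for some $f$ supported in the half-space $A=\{x_1>0\}$. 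One then chooses a target profile on $B$ concentrated near a frequency where the reflection-weighted integrand is negative — mimicking the "shift the weight away from zero" mechanism of Lemma~\ref{lemm2}, with the derivatives/concentration argument transplanted to the Fourier side — and transfers it back to a genuine half-space-supported $f$ by density, the strict inequality being preserved under a small enough perturbation. A secondary technical wrinkle is making sure $\rho$ can be chosen $\Theta$-invariant and equal to $1$ on a set large enough to contain a neighborhood of $\text{supp}(f)$ so that $\rho$ genuinely disappears from the leading computation; this is arranged by first fixing $f$, then enlarging $\rho$. I would present the negative-covariance construction as a short lemma (the $\mathbb{R}^3$ analogue of the $f$-construction in Section~\ref{sec2}, now invoking Lemma~\ref{keylemma} in place of Lebeau--Robbiano/Paley--Wiener surjectivity), and then the $c$-continuity argument as above.
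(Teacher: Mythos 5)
Your overall architecture coincides with the paper's: reduce to the $c=0$ Gaussian computation by dominated convergence, then exhibit a half-space-supported $f$ making the cut-off covariance pairing strictly negative. Two points in your execution need repair or comment.

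First, in the dominated convergence step you dominate by $1$, asserting $G_{c,\rho,\Lambda}\leqslant 1$. This fails when the renormalization constant $a_\Lambda$ is negative (the relevant case, $a_\Lambda$ being a divergent counterterm): the factor $e^{-ca_\Lambda\Vert\rho\Phi_\Lambda\Vert_{L^2}^2}$ is then unbounded. The paper patches this with a Young-inequality lower bound $c\Vert\rho\Phi_\Lambda\Vert_{L^4}^4+ca_\Lambda\Vert\rho\Phi_\Lambda\Vert_{L^2}^2\geqslant \tfrac{2c}{3}\Vert\rho\Phi_\Lambda\Vert_{L^4}^4-cK$, valid because the quartic term dominates the quadratic one on the compact support of $\rho$; this gives the uniform domination $G_{c,\rho,\Lambda}\leqslant e^{cK}$ for $c$ in a bounded range, and you need some such bound.

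Second, the construction of the negative test function is where you genuinely diverge from the paper. You propose to make $\rho$ disappear by supporting $f$ inside $\{\rho=1\}$ and then to recycle the cylinder construction of Theorem~\ref{thm2} and Lemma~\ref{lemm2}. This can be made to work, but your stated resolution of the support constraint --- ``first fix $f$, then enlarge $\rho$'' --- changes the measure $\nu_{c,\rho,\Lambda}$ and therefore proves the statement only for a modified $\rho$, whereas the theorem fixes $\rho$ beforehand. (It is salvageable without touching $\rho$, since the Lemma~\ref{lemm2} profile can, after the $\Lambda$-rescaling, be squeezed into any neighbourhood of a point of the reflection hyperplane on which $\rho\equiv 1$; but this has to be said.) The paper sidesteps the issue entirely by keeping $\rho$ inside the Fourier-restriction map $T:f\in L^2(\mathbb{R}^{3}_+)\mapsto\widehat{\rho f}\,\chi_{B(0,\Lambda)}$, proving density of its range in $L^2(B(0,\Lambda))$ for the \emph{given} $\rho$ (only $\rho>0$ somewhere in the half-space is used, via Paley--Wiener), and then approximating the target profile $\xi_1\chi_{B(0,\Lambda)}(\xi)$. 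The whole point of that target is that it is odd under $\xi_1\mapsto-\xi_1$, so the reflected integrand becomes $-\xi_1^2\leqslant 0$ pointwise and no concentration or high-derivative mechanism in the style of Lemma~\ref{lemm2} is required. Your description of the target as a profile ``concentrated near a frequency where the reflection-weighted integrand is negative'' misses this simpler mechanism; either route yields the negativity, but the odd-profile argument is shorter and works uniformly in $\Lambda$ and in the admissible $\rho$.
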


\begin{proof}
Since $0\leq G_{c,\rho,\Lambda}(\varphi)$ is converging to $1$ as $c>0$ goes to $0$ for all distributions $\varphi$ one has from dominated convergence
\begin{equation} \label{EqConvergenceSmallCoupling}
\mathbb{E}_{\nu_{c,\rho,\Lambda}}\left[  \overline{(\rho\Pi_\Lambda\Phi)(\Theta f)} \, (\rho\Pi_\Lambda\Phi)(f) \right] \underset{c\rightarrow 0^+}{\longrightarrow} \mathbb{E}_{\mu_{\text{GFF}}}\left[ \overline{(\rho\Pi_\Lambda\Phi)(\Theta f)} \, (\rho\Pi_\Lambda\Phi)(f) \right].
\end{equation}
To justify the use of the dominated convergence at some fixed cut-off $\rho$ and $\Lambda$ we note the lower bound on the interaction
$$ 
c\Vert\rho\Phi_\Lambda \Vert^4_{L^4(\mathbb{R}^3)}+ca_\Lambda \Vert\rho\Phi_\Lambda \Vert^2_{L^2(\mathbb{R}^3)} \geqslant c\int_{\mathbb{R}^3} \rho^4\Phi_\Lambda^4 - c\frac{\vert a_\Lambda\vert}{2\delta^2} \text{Vol}(\text{supp}(\rho))-c\frac{\vert a_\Lambda \vert\delta^2}{2}\int_{\mathbb{R}^3} \Phi_\Lambda^4\rho^4,  
$$
for any $\delta>0$, using Young's inequality and the compactness of the support of $\rho$. Choosing $\delta$ small enough yields a lower bound of the form
$$
c\Vert\rho\Phi_\Lambda \Vert^4_{L^4(\mathbb{R}^3)}+ca_\Lambda \Vert\rho\Phi_\Lambda \Vert^2_{L^2(\mathbb{R}^3)}  \geqslant c \frac{2}{3} \Vert\rho\Phi_\Lambda \Vert^4_{L^4(\mathbb{R}^3)}-cK 
$$
for some $K>0$.

From the convergence result in \eqref{EqConvergenceSmallCoupling} it suffices to find $f\in L^2(\mathbb{R}^{3}_+)$ such that
$$ 
\mathbb{E}_{\mu_{\text{GFF}}}\left[ \overline{(\rho\Pi_\Lambda\Phi)(\Theta f)} \, (\rho\Pi_\Lambda\Phi)(f) \right] =  \left\langle {\color{black} \psi_\Lambda(\sqrt{\Delta})} (\rho \overline{\Theta f}) \, , \, (\Delta+1)^{-1}\big(\psi_\Lambda(\sqrt{\Delta}) (\rho f)\big) \right\rangle_{L^2} < 0
$$ 
and choose $c>0$ small enough. Set the linear map 
\begin{eqnarray*}
T: f\in L^2(\mathbb{R}^{3}_+)\longmapsto \widehat{\rho f} \chi_{B(0,\Lambda)}\in L^2\big(B(0,\Lambda)\big)
\end{eqnarray*}
where on the right hand side we consider the restriction of the Fourier transform $\widehat{\rho f}$ to the Fourier ball of radius $\Lambda$. By a similar argument as in Lemma~\ref{keylemma}, the key idea is to prove that the image of $T$ is dense in $L^2(B(0,\Lambda))$. If it were not true $\text{Ran}(T)^\perp$ would be a non-empty closed vector subspace and there would be some non-null element $g\in L^2(B(0,\Lambda))$ such that for all $f\in L^2(\mathbb{R}^{3}_+)$
\begin{eqnarray*}
0 = \big\langle \widehat{\rho f},g\big\rangle _{L^2(B(0,\Lambda))} = \big\langle \widehat{\rho f},\chi_{B(0,\Lambda)}g\big\rangle_{L^2(\mathbb{R}^3)} = \left\langle  f, \rho\mathcal{F}^{-1}\left( \chi_{B(0,\Lambda)}g\right)\right\rangle_{L^2(\mathbb{R}^3)},
\end{eqnarray*}
from Plancherel identity. The function $\rho\mathcal{F}^{-1}\left( \chi_{B(0,\Lambda)}g\right)$ is therefore not supported on the half-space $\{x_1>0\}$.
Since $\rho>0$ on some non-empty region of $\{x_1>0\}$ this implies that the function $\mathcal{F}^{-1}\left( \chi_{B(0,\Lambda)}g\right)$ vanishes in some open subset contained in the half-space $\{x_1>0\}$. This contradicts the fact that $\mathcal{F}^{-1}\left( \chi_{B(0,\Lambda)}g\right)$ is a non-trivial analytic function by the Paley-Wiener Theorem.

So the provisional conclusion is that there exists a sequence $f_n$ in $ L^2(\mathbb{R}^{3}_+)$ such that the sequence $T(f_n)$ converges in $L^2(B(0,\Lambda))$ to $\xi_1 \chi_{B(0,\Lambda)}(\xi)$. Therefore for this precise sequence $f_n$ we find that
\begin{equation*} \begin{split}
\left\langle \psi_\Lambda(\sqrt{\Delta}) (\rho \overline{\Theta f}) \, , \, (\Delta+1)^{-1}\right.&\left.\big(\psi_\Lambda(\sqrt{\Delta}) (\rho f)\big) \right\rangle_{L^2}   \\
&= \int_{\mathbb{R}^{3*}} \psi_\Lambda(\vert \xi\vert)^2 \big(\vert \xi\vert^2+1\big)^{-1} \, \overline{T(f_n)}(-\xi_1,\xi_2,\xi_3) \, T(f_n)(\xi_1,\xi_2,\xi_3) \, d\xi   \\
\end{split} \end{equation*} 
is converging as $n$ goes to $\infty$ to
$$
\int_{\mathbb{R}^{3*}} \psi_\Lambda(\xi)^2 \big(\vert \xi\vert^2+1\big)^{-1}(-\xi_1^2)\chi_{B(0,\Lambda)}^2\,d\xi = -\int_{\mathbb{R}^{3*}} \psi_\Lambda(\xi)^2 \big(\vert \xi\vert^2+1\big)^{-1}\xi_1^2\,d\xi < 0.
$$
This shows indeed that the heat regularized (and space localized) Gaussian free field measure is not reflection positive. Of course the Gaussian free field measure itself is reflection positive and the above proof breaks down when $\Lambda=+\infty$ as the function $\xi_1$ is no longer an elelment of $L^2$.
\end{proof}

Theorem \ref{ThmNotRFRegularizedPhi43} \textbf{does not exclude} the fact that $\nu_{c,\rho,\Lambda}$ may be reflection positive for some coupling constant $c$ that would be large enough.


\begin{thebibliography}{0}

\bibitem{AK}
Albevero, A. and Kusuoka, S. {\it Construction of a non-Gaussian and rotation-invariant $\Phi^ 4$-measure and associated flow on ${\mathbb R}^ 3$ through stochastic quantization}. arXiv preprint arXiv:2102.08040 (2021).

\bibitem{Bog}
Bogachev, V.I.. {\it Gaussian measures}. No. 62. American Mathematical Soc., 1998.

\bibitem{DBLR}
Brouder C., Dang N. V., Laurent-Gengoux C. and Rejzner K.. {\it Properties of field functionals and characterization of local functionals}. J. Math. Phys., {\bf 59}(2) (2021).

\bibitem{Dim} 
Dimock, J.. {\it Markov quantum fields on a manifold}. World Scientific Pub Co Pte Lt (2003).

\bibitem{GJ}
Glimm J. and Jaffe A. {\it Quantum Physics: A Functional Integral Point of View}, (1981).

\bibitem{GH}
Gubinelli M. and Hofmanova M. {\it A PDE construction of the Euclidean $\Phi^4_3$ quantum field theory}. Comm. Math. Phys., {\bf 384}(1):1--75, (2021).

\bibitem{JR07}
Jaffe, A. and Ritter G. {\it Quantum field theory on curved backgrounds. I. The Euclidean functional integral}. Comm. Math. Phys., {\bf 270}:545--572, (2007).

\bibitem{JR08}
Jaffe A. and Ritter G. {\it Quantum field theory on curved backgrounds. II. Spacetime symmetries}. arXiv preprint arXiv:0704.0052 (2007).

\bibitem{JRmon}
Jaffe A. and Ritter G. {\it Reflection positivity and monotonicity}. J. Math. Phys., {\bf 49}(5) (2008).

\bibitem{JL99}
Jerison, D. and Lebeau, G. {\it Nodal sets of sums of eigenfunctions}. Harmonic analysis and partial differential equations (Chicago, IL, 1996), Chicago Lectures in Math (1999): 223-239.

\bibitem{HairerSPDE}
Hairer M. {\it An introduction to stochastic PDEs}. arXiv preprint arXiv:0907.4178 (2009).

\bibitem{LR95}
Lebeau G., and Robbiano L. {\it Contrôle exact de l'équation de la chaleur}. Comm. Partial Diff. Eq., {\bf 20}(1-2):335--356, (1995).

\bibitem{LZ98}
Lebeau G. and Zuazua E. {\it Null-controllability of a system of linear thermoelasticity}. Arch. Rat. Mech. Anal., {\bf 141}:297--329, (1998).

\bibitem{Nel1}
Nelson E. {\it Construction of quantum fields from Markov fields}, J. Func. Anal., {\bf 12}:97--112, (1973).

\bibitem{Nel2}
Nelson E. {\it The free Markov field}. J. Func. Anal., {\bf 12}:211--227, (1973).

\bibitem{OS}
Osterwalder K. and Schrader R. {\it Axioms for Euclidean Green's Functions}. Comm. Math. Phys., {\bf 31}:83--112, (1973).

\end{thebibliography}
\end{document}